\theoremstyle{plain}
 \newtheorem{thm}{Theorem}[section]
 \newtheorem{cor}[thm]{Corollary}
 \newtheorem{lem}{Lemma}[section]
 \newtheorem{prop}{Proposition}[section]
 \theoremstyle{definition}
 \newtheorem{defn}{Definition}[section]
 \theoremstyle{remark}
 \newtheorem{rem}{Remark}
 \newtheorem{example}{Example}[section]
 \newtheorem*{acknow}{Acknowledgements}
 \numberwithin{equation}{section}
  \newcommand{\Mob}{\mathcal{M}}
 \newcommand{\R}{\mathbb{R}}
 \newcommand{\To}{\longrightarrow}
 \newcommand{\Z}{\mathbb{Z}}
\begin{document}

\title[M\"obius transform,  moment-angle complexes and H--C conjecture]
 {M\"obius transform,  moment-angle Complexes and Halperin--Carlsson conjecture}

\author{Xiangyu CAO and Zhi L\"{U}}
\address{School of Mathematical Sciences, Fudan University, Shanghai, 200433, People's Republic of China.}
\thanks{Supported by grants from FDUROP (No. 080705) and
 NSFC (No. 10671034, No. J0730103)}
 \subjclass[2000]{Primary 05E45, 05E40,
13F55, 57S25;  Secondary 55U05, 16D03, 18G15,  57S10.}
\keywords{M\"obius transform, moment-angle complex,
Halperin--Carlsson conjecture}
\email{xiangyu.cao08@gmail.com}
\address{Institute of Mathematics, School of Mathematical Science, Fudan University, Shanghai,
200433, People's Republic of China.}
 \email{zlu@fudan.edu.cn}

\begin{abstract}
In this paper,  we give an algebra--combinatorics formula of the
M\"obius transform for  an abstract simplicial complex $K$ on
$[m]=\{1, ..., m\}$ in terms of the Betti numbers of the
Stanley--Reisner face ring of $K$. Furthermore, we  employ a way of
compressing $K$ to estimate the lower bound of the sum of those
Betti numbers by using this formula. As an application, associating
with the moment-angle complex $\mathcal{Z}_K$ (resp. real
moment-angle complex ${\Bbb R}\mathcal{Z}_K$) of $K$, we show that
the Halperin--Carlsson conjecture holds for $\mathcal{Z}_K$ (resp.
${\Bbb R}\mathcal{Z}_K$) under the restriction of the natural
$T^m$-action on $\mathcal{Z}_K$ (resp. $({\Bbb Z}_2)^m$-action
 on ${\Bbb R}\mathcal{Z}_K$).
\end{abstract}

\maketitle

\section{Introduction}

Throughout this paper, assume that  $m$ is a positive integer and
$[m]=\{1, ..., m\}$. Also, ${\bf k}_\ell$ denotes the field of
characteristic $\ell$ and ${\bf k}$ denotes a field of arbitrary
characteristic. Let
$$2^{[m]*}=\big\{f \big| f:2^{[m]}\To\Z/2\Z=\{0,1\}\big\}$$
consisting of all $\Z/2\Z$-valued functions on the power set
$2^{[m]}$. $2^{[m]*}$ forms an algebra over $\Z/2\Z$  in the usual
way, and it has a natural basis $\{\delta_a| a\in 2^{[m]}\}$ where
$\delta_a$ is defined as follows: $\delta_a(b)=1\Longleftrightarrow
b=a$.   Given a $f\in 2^{[m]*}$, the inverse image of $f$ at 1 is
called the {\em support} of $f$, denoted by $\text{supp}(f)$. $f$ is
said to be {\em nice} if $\text{supp}(f)$ is an abstract simplicial
complex. Thus, we can identify all nice functions in $2^{[m]*}$ with
all abstract simplicial subcomplexes in $2^{[m]}$. On $2^{[m]*}$, we
then  define a $\Z/2\Z$-valued M\"obius transform $\Mob:
2^{[m]*}\longrightarrow 2^{[m]*}$ by the following way: for any
$f\in 2^{[m]*}$ and $a\in 2^{[m]}$, $\Mob(f)(a)=\sum_{b\subseteq
a}f(b)$.

\vskip .2cm

 Now let $f\in 2^{[m]*}$ be nice such that
$K_f=\text{supp}(f)$ is an abstract simplicial complex on $[m]$, and
let ${\bf k}(K_f)$ be the Stanley--Reisner face ring of $K_f$. The
following result indicates an essential relationship between
$\Mob(f)$ and the Betti numbers of  ${\bf k}(K_f)$.

\begin{thm}[Algebra--combinatorics formula]\label{alg-comb}
Suppose that $f\in 2^{[m]*}$ is nice such that
$K_f=\mathrm{supp}(f)$ is an abstract simplicial complex on $[m]$.
Then
$$\Mob(f)=\sum_{i=0}^h\sum_{a\in 2^{[m]}} \beta_{i, a}^{{\bf k}(K_f)}\delta_a$$
where  $h$ denotes the length of the minimal free resolution of
${\bf k}(K_f)$, and $\beta_{i, a}^{{\bf k}(K_f)}$'s denote the Betti
numbers of ${\bf k}(K_f)$ $($see Definition~\ref{betti n}$)$.
\end{thm}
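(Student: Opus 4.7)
The plan is to verify the identity at each coefficient $a\in 2^{[m]}$, reducing both sides to the parity of the number of faces of $K_f$ contained in $a$. Unpacking the definition of $\Mob$ gives
$$\Mob(f)(a) \;=\; \sum_{b\subseteq a} f(b) \;\equiv\; \bigl|\{b\subseteq a : b\in K_f\}\bigr| \pmod 2,$$
which is the parity of $|K_f|_a|$, the number of faces of the induced subcomplex $K_f|_a := \{b\in K_f : b\subseteq a\}$ (counting the empty face $\varnothing$).

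On the algebraic side, I would invoke Hochster's formula for the Stanley--Reisner ring ${\bf k}(K_f)$: for each squarefree multidegree $a\subseteq [m]$,
$$\beta_{i,a}^{{\bf k}(K_f)} \;=\; \dim_{{\bf k}}\widetilde H_{|a|-i-1}(K_f|_a;{\bf k}),$$
with $\beta_{i,a}^{{\bf k}(K_f)}=0$ in non-squarefree multidegrees. Summing over $0\le i\le h$ yields the total sum of the reduced Betti numbers of $K_f|_a$, and reducing modulo $2$ turns the alternating sum into a reduced Euler characteristic:
$$\sum_{i=0}^{h}\beta_{i,a}^{{\bf k}(K_f)} \;\equiv\; \sum_{j\ge -1}(-1)^{j}\dim_{{\bf k}}\widetilde H_{j}(K_f|_a;{\bf k}) \;=\; \widetilde\chi(K_f|_a) \pmod 2.$$
Unpacking the Euler characteristic as a face count gives $\widetilde\chi(K_f|_a) = -1 + \sum_{\varnothing\neq b\in K_f|_a}(-1)^{|b|-1} \equiv |K_f|_a| \pmod 2$ (counting $\varnothing$), which matches the combinatorial side and yields the claimed equality of coefficients at $a$.

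The main obstacle is not conceptual but one of bookkeeping: invoking Hochster's formula in the correct squarefree-multigraded form, treating the empty face consistently on both sides (it contributes a summand to $\Mob(f)(a)$ and encodes the augmentation in $\widetilde\chi$), and checking that the summation bound $h$ is sufficient, i.e.\ $\beta_{i,a}^{{\bf k}(K_f)}=0$ for $i>h$, which is immediate from $h$ being the length of the minimal free resolution of ${\bf k}(K_f)$.
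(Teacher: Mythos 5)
Your argument is correct, but it takes a genuinely different route from the paper's. The paper never invokes Hochster's formula here; instead it works directly with the minimal free resolution $0\leftarrow{\bf k}(K_f)\leftarrow F_0\leftarrow\cdots\leftarrow F_h\leftarrow 0$ restricted to a single squarefree degree $b$. Exactness of the degree-$b$ strand yields $\dim_{\bf k}{\bf k}(K_f)_b=\sum_{i=0}^h(-1)^i\dim_{\bf k}(F_i)_b$; the key observation is that $\dim_{\bf k}{\bf k}(K_f)_b=f(b)$ (the squarefree Hilbert function of the Stanley--Reisner ring is the indicator of faces) and that $\dim_{\bf k}(F_i)_b=\sum_{a\subseteq b}\beta_{i,a}^{{\bf k}(K_f)}$ because $F_i\cong\bigoplus_a{\bf k}[{\bf v}](-a)^{\beta_{i,a}}$. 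Reducing mod $2$ gives $f=\sum_{i,a}\beta_{i,a}^{{\bf k}(K_f)}\mu_a$, and applying $\Mob$ with $\Mob(\mu_a)=\delta_a$ finishes. Your version instead evaluates $\Mob(f)$ coefficientwise: you turn the left side into the parity of $|K_f|_a|$ and, via Hochster's formula $\beta_{i,a}^{{\bf k}(K_f)}=\dim_{\bf k}\widetilde H_{|a|-i-1}(K_f|_a;{\bf k})$, turn the right side into the total reduced Betti number of $K_f|_a$, which equals $\widetilde\chi(K_f|_a)\equiv|K_f|_a|\pmod 2$. Both arguments are valid; the paper's is more elementary and self-contained (it uses nothing beyond the definition of a minimal free resolution and the squarefree Hilbert function), whereas yours relies on Hochster's formula but buys a cleaner combinatorial-topological interpretation of each coefficient. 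One small bookkeeping point worth stating explicitly in your write-up: the range of $j=|a|-i-1$ for $0\le i\le h$ need not a priori cover all $j\ge -1$, but the discrepancy is harmless because $\widetilde H_j(K_f|_a)=0$ for $j>\dim K_f|_a$ and, for $j<|a|-h-1$, Hochster's formula forces $\widetilde H_j(K_f|_a)=0$ since $\beta_{i,a}=0$ when $i>h$.
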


The formula of Theorem~\ref{alg-comb} leads to the following
inequality
$$|\text{supp}(\Mob(f))|\leq \sum_{i=0}^h\sum_{a\in 2^{[m]}} \beta_{i, a}^{{\bf k}(K_f)}.$$
See Corollary~\ref{bt}.  Then we use an approach  of compressing
$\mathrm{supp}(f)$ to further analyze the lower bound of
$|\text{supp}(\Mob(f))|$, and the result is stated as follows.

\begin{thm}\label{lower-bound}
Let $f\in 2^{[m]*}$ be nice such that $K_f=\mathrm{supp}(f)$ is an
abstract simplicial complex on $[m]$. Then there exists some $a\in
\mathrm{supp}(f)$ such that
$$|\mathrm{supp}(\Mob(f))|\geq 2^{m-|a|}.$$
\end{thm}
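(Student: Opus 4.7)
The plan is to reformulate the problem in terms of multilinear polynomials over $\Z/2\Z$ and then take $a$ to be a facet of $K_f$. Identifying $2^{[m]}$ with $(\Z/2\Z)^m$ via characteristic vectors, every function $g\colon(\Z/2\Z)^m\to\Z/2\Z$ admits a unique multilinear polynomial representative $\widetilde{g}(x)=\sum_{S\subseteq[m]}c_S\prod_{i\in S}x_i$; a short M\"obius inversion on the Boolean lattice $(2^{[m]},\subseteq)$ shows that $c_S=\sum_{T\subseteq S}g(T)=\Mob(g)(S)$. Thus $|\mathrm{supp}(\Mob(f))|$ is precisely the number of monomials of the polynomial $\widetilde{f}$ representing $\mathbf{1}_{K_f}$.

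I would then pick any facet $a\in K_f$ and specialize $x_j\mapsto 1$ for every $j\in a$ in $\widetilde{f}$. The resulting polynomial in the variables $\{x_i:i\in[m]\setminus a\}$ represents the function $y\mapsto \mathbf{1}_{K_f}(a\cup y)$ on $(\Z/2\Z)^{[m]\setminus a}$, and because $a$ is maximal in $K_f$, this function is the Kronecker delta at $y=\emptyset$. Its unique multilinear representative over $\Z/2\Z$ is
\[
\prod_{i\in[m]\setminus a}(1+x_i)=\sum_{T\subseteq[m]\setminus a}\prod_{i\in T}x_i,
\]
which contains all $2^{m-|a|}$ monomials with nonzero coefficient.

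Finally, I would group the monomials of $\widetilde{f}$ by the projection $S\mapsto S\setminus a$: the coefficient of $\prod_{i\in T}x_i$ in the specialized polynomial equals $\sum_{U\subseteq a}\Mob(f)(T\cup U)$, which must equal $1$ for each $T\subseteq[m]\setminus a$ by the preceding paragraph. Consequently, for every such $T$ there is at least one $U_T\subseteq a$ with $T\cup U_T\in\mathrm{supp}(\Mob(f))$, and since the assignment $T\mapsto T\cup U_T$ is injective we obtain $|\mathrm{supp}(\Mob(f))|\ge 2^{m-|a|}$. The only step I expect will require care is the identification $c_S=\Mob(f)(S)$ in the first paragraph; once that is in hand, the crux of the argument is the observation that a face $a\in K_f$ is a facet precisely when $\mathbf{1}_{K_f}$ restricts to a Kronecker delta on $\{a\cup y:y\subseteq[m]\setminus a\}$, so that its multilinear representation has full monomial support.
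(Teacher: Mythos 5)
Your argument is correct and takes a genuinely different route from the paper. The paper proves this result by an iterative compression process: it introduces the operators $E_k$, shows that whenever $f$ is $k$-th extendable the replacement $f\mapsto E_k(f)$ strictly shrinks $\mathrm{supp}(f)$ without increasing $|\mathrm{supp}(\Mob(f))|$, iterates until reaching a non-extendable $f_0$, and then characterizes such $f_0$ as precisely those with $\mathrm{supp}(f_0)=2^{a_0}$, for which $|\mathrm{supp}(\Mob(f_0))|=2^{m-|a_0|}$. You instead identify $\Mob(f)$ with the coefficient vector of the unique multilinear representative $\widetilde f$ over $\Z/2\Z$, pick a facet $a$ of $K_f$, specialize $x_j\mapsto 1$ for $j\in a$, observe that the specialization must equal $\prod_{i\notin a}(1+x_i)$ because $\mathbf{1}_{K_f}(a\cup\cdot)$ is the Kronecker delta at $\emptyset$, and then lift each of its $2^{m-|a|}$ nonzero coefficients back to a distinct element of $\mathrm{supp}(\Mob(f))$ via the (injective, since $T\subseteq[m]\setminus a$ and $U_T\subseteq a$) map $T\mapsto T\cup U_T$. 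Your argument is shorter, avoids the compression machinery entirely, and in fact delivers a mildly sharper conclusion: the bound $|\mathrm{supp}(\Mob(f))|\ge 2^{m-|a|}$ holds for \emph{every} facet $a$ of $K_f$, not merely for some $a\in\mathrm{supp}(f)$. The paper's Remark after Theorem~\ref{t:extend} notes (without proof, as it is not used) that the $a_0$ produced by compression is always a facet, so the two approaches arrive at the same underlying structural fact, but you reach it directly rather than as a byproduct of the compression analysis.
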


\begin{rem}
Since $a\in \text{supp}(f)$, $|a|\leq \dim K_f+1$, so
$|\mathrm{supp}(\Mob(f))|\geq 2^{m-|a|}\geq 2^{m-\dim K_f-1}$.
\end{rem}

As a result, we can consider the Halperin--Carlsson conjecture in the
category of (real) moment-angle complexes. Let $\mathcal{Z}_K$
(resp. ${\Bbb R}\mathcal{Z}_K$)  be the moment-angle complex (resp.
real moment-angle complex) on $K$ where $K$ is an abstract
simplicial complex on vertex set $[m]$. Then we have that $ \sum_i
\dim_{{\bf k}} H^i(\mathcal{Z}_K; {\bf k})=\sum_i \dim_{{\bf k}} H^i({\Bbb R}\mathcal{Z}_K; {\bf
k})$ for any ${\bf k}$ (see Theorem~\ref{module-str}). $\mathcal{Z}_K$ (resp.
${\Bbb R}\mathcal{Z}_K$) naturally admits a $T^m$-action $\Phi$
(resp. $(\Z_2)^m$-action $\Phi_{\Bbb R}$).
\begin{thm}\label{main}
Let $H$ $($resp. $H_{\Bbb R})$ be a rank $r$ subtorus of $T^m$
$($resp. $(\Z_2)^m)$. If $H$ $($resp. $H_{\Bbb R})$ can act freely
on $\mathcal{Z}_K$ $($resp. ${\Bbb R}\mathcal{Z}_K)$, then $$ \sum_i
\dim_{{\bf k}} H^i(\mathcal{Z}_K; {\bf k})=\sum_i \dim_{{\bf k}} H^i({\Bbb R}\mathcal{Z}_K; {\bf
k})\geq 2^r.$$
\end{thm}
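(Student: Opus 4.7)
The plan is to combine the algebraic identity of Theorem~\ref{alg-comb} with the lower bound of Theorem~\ref{lower-bound}, using the standard identification of the cohomology of a moment-angle complex with multigraded Tor of the Stanley--Reisner ring. By the Buchstaber--Panov/Franz theorem,
$$H^*(\mathcal{Z}_K;\mathbf{k})\cong\mathrm{Tor}_{\mathbf{k}[v_1,\dots,v_m]}(\mathbf{k}(K),\mathbf{k}),$$
so $\sum_i\dim_{\mathbf{k}}H^i(\mathcal{Z}_K;\mathbf{k})=\sum_{i,a}\beta_{i,a}^{\mathbf{k}(K)}$. Combined with Theorem~\ref{module-str}, which gives the equality of total Betti dimensions for $\mathcal{Z}_K$ and $\mathbb{R}\mathcal{Z}_K$, it suffices to prove $\sum_{i,a}\beta_{i,a}^{\mathbf{k}(K)}\geq 2^r$.

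Next, take $f\in 2^{[m]*}$ to be the characteristic function of $K$, so $K_f=K$. Theorem~\ref{alg-comb} reads $\mathrm{Mob}(f)(a)\equiv\sum_i\beta_{i,a}^{\mathbf{k}(K)}\pmod 2$, and since any odd nonnegative integer is at least $1$,
$$\sum_{i,a}\beta_{i,a}^{\mathbf{k}(K)}\ \geq\ |\mathrm{supp}(\mathrm{Mob}(f))|.$$
Then Theorem~\ref{lower-bound} produces some $a\in K$ with $|\mathrm{supp}(\mathrm{Mob}(f))|\geq 2^{m-|a|}$. So the theorem will follow if we can show $|a|\leq m-r$ for the specific $a$ output by Theorem~\ref{lower-bound}; in fact, free action of a rank-$r$ subtorus will force this bound for \emph{every} simplex of $K$.

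The remaining geometric step is standard. Viewing $\mathcal{Z}_K\subset(D^2)^m$ with coordinatewise $T^m$-action, the isotropy group of any point is a coordinate subtorus $T^\sigma$ with $\sigma\in K$; so $H\subset T^m$ acts freely iff $H\cap T^\sigma=\{1\}$ for all $\sigma\in K$. Passing to cocharacter lattices, if $\mathrm{rank}(H)+|\sigma|>m$ then $H\cap T^\sigma$ is positive-dimensional, so freeness forces $|\sigma|\leq m-r$ for every $\sigma\in K$. Applying this to the $a$ of the previous step yields $2^{m-|a|}\geq 2^r$, completing the $\mathcal{Z}_K$ case. The $\mathbb{R}\mathcal{Z}_K$ case is identical with $T^m$, $T^\sigma$ replaced by $(\mathbb{Z}_2)^m$, $(\mathbb{Z}_2)^\sigma$, the rank count being carried out over $\mathbb{F}_2$. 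The main obstacle is really only bookkeeping: making sure the Buchstaber--Panov formula and Theorem~\ref{module-str} are invoked in the form that holds over an arbitrary field $\mathbf{k}$, and that the rank-count lemma on subtori/subgroups is stated cleanly enough to cover both the continuous and the $2$-torus settings simultaneously.
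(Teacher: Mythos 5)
Your proposal is correct and follows essentially the same route as the paper's proof: reduce $\sum_i\dim H^i(\mathcal{Z}_K;\mathbf{k})$ to $\sum_{i,a}\beta_{i,a}^{\mathbf{k}(K)}$ via Theorem~\ref{module-str} (and Proposition~\ref{tor-betti}), bound that below by $|\mathrm{supp}(\Mob(f))|$ via Theorem~\ref{alg-comb}/Corollary~\ref{bt}, invoke Theorem~\ref{lower-bound} for the bound $2^{m-|a|}$ with $a\in K$, and finish with the isotropy rank count (the paper's Lemma~\ref{lem:coloring}) to get $|a|\leq m-r$. Your formulation of the geometric step — that every coordinate isotropy subtorus $T^\sigma$, $\sigma\in K$, must intersect $H$ trivially, which by a Lie-algebra/$\mathbb{F}_2$-dimension count forces $|\sigma|\leq m-r$ — is the same observation as the paper's, stated slightly more uniformly and in a way that also absorbs the paper's separately-handled edge case $K=2^{[m]}$.
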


\begin{rem}
In Theorem~\ref{main}, the action of $H$ (resp. $H_{\Bbb R}$) on
$\mathcal{Z}_K$ (resp. ${\Bbb R}\mathcal{Z}_K$) is naturally
regarded as the restriction of the $T^m$-action $\Phi$ to $H$ (resp.
the $(\Z_2)^m$-action $\Phi_{\Bbb R}$ to $H_{\Bbb R}$).
\end{rem}

\begin{cor}\label{H-C conj}
The Halperin--Carlsson conjecture holds for $\mathcal{Z}_K$ $($resp.
${\Bbb R}\mathcal{Z}_K)$ under the restriction of the $T^m$-action
$\Phi$ $($resp. the $({\Bbb Z}_2)^m$-action $\Phi_{\Bbb R})$.
\footnote[2]{\ T. E. Panov informs of us that
 using a different method, Yury Ustinovsky has also
recently
proved the Halperin's toral rank conjecture for the moment-angle complexes with the restriction of natural tori actions, see arXiv:0909.1053.}
\end{cor}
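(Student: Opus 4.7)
My plan is to observe that Corollary \ref{H-C conj} is essentially a restatement of Theorem \ref{main} in the language of the Halperin--Carlsson conjecture. First I would recall the two statements being asserted: Halperin's toral rank conjecture says that if a torus of rank $r$ acts almost freely on a suitable space $X$, then $\sum_i \dim_{\mathbb{Q}} H^i(X;\mathbb{Q}) \geq 2^r$, while Carlsson's mod-$2$ analogue says that if an elementary abelian $2$-group of rank $r$ acts freely on a finite CW complex $X$, then $\sum_i \dim_{{\bf k}_2} H^i(X;{\bf k}_2) \geq 2^r$. In the present setting one considers only the subgroups obtained by restricting the natural actions $\Phi$ and $\Phi_{\Bbb R}$.

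Next I would verify that, once specialized to these natural actions on moment-angle complexes, the two assertions are exactly what Theorem \ref{main} already supplies. Any rank-$r$ subtorus $H\leq T^m$ acting freely on $\mathcal{Z}_K$ through the restriction of $\Phi$ satisfies the hypothesis of Theorem \ref{main}, so $\sum_i \dim_{{\bf k}} H^i(\mathcal{Z}_K;{\bf k})\geq 2^r$ for every field ${\bf k}$; taking ${\bf k}=\mathbb{Q}$ produces the Halperin bound. The parallel argument for a rank-$r$ subgroup $H_{\Bbb R}\leq ({\Bbb Z}_2)^m$ acting freely on ${\Bbb R}\mathcal{Z}_K$, with ${\bf k}={\bf k}_2$, produces the Carlsson bound on ${\Bbb R}\mathcal{Z}_K$. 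The equality of total Betti numbers between $\mathcal{Z}_K$ and ${\Bbb R}\mathcal{Z}_K$ recorded in the discussion preceding Theorem \ref{main} ensures that the two inequalities are indeed the same numerical bound $2^r$.

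Because the corollary demands nothing beyond invoking Theorem \ref{main}, there is no substantive obstacle at this stage. All of the real content has been absorbed into the proof of Theorem \ref{main}, which itself depends on the combinatorial lower bound in Theorem \ref{lower-bound} and the standard identification of $H^*(\mathcal{Z}_K;{\bf k})$ with the multi-graded \emph{Tor}-modules over the Stanley--Reisner ring. The only minor check is that the restricted action of $H$ (resp.\ $H_{\Bbb R}$) is genuinely a free action of a rank-$r$ group in the sense used by Theorem \ref{main}, which is built into the hypothesis of the corollary.
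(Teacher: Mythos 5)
Your proposal is correct and follows exactly the paper's (implicit) reasoning: the corollary is a direct specialization of Theorem~\ref{main}, whose inequality holds over every field ${\bf k}$, so taking ${\bf k}=\mathbb{Q}$ for the torus case and ${\bf k}={\bf k}_2$ for the $2$-torus case yields the Halperin--Carlsson bound with no further work.
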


\begin{rem}
Following~\cite{p}, the Halperin--Carlsson conjecture is stated as
follows:
\begin{enumerate}
\item[$\bullet$]
 Let $X$ be  a finite-dimensional
paracompact Hausdorff space. If  $X$ admits a free action of a torus
$T^r$ (resp. a $p$-torus $({\Bbb Z}_p)^r, p$ prime) of rank $r$,
then
\begin{equation}\label{conjecture}
\sum_i \dim_{{\bf k}_\ell} H^i(X; {\bf
k}_\ell)\geq 2^r
\end{equation} where $\ell$ is $0$ (resp. $p$).
\end{enumerate}
Historically, the above conjecture in the $p$-torus case originates
from the  work of P. A. Smith (\cite{s}). For the case of a
$p$-torus $(\Z_p)^r$ freely acting on a finite CW-complex homotopic
to $(S^n)^k$ suggested by P. E. Conner (\cite{c}), the problem has
made an essential progress (see \cite{ab}, \cite{ca1}--\cite{ca2}
and \cite{y}). In the general case, the inequality
(\ref{conjecture}) was conjectured by S. Halperin in \cite{h} for
the torus case, and by G. Carlsson in \cite{ca3} for the $p$-torus
case. So far, the conjecture holds if $r\leq 3$ in the torus and
2-torus cases and if $r\leq 2$ in the odd $p$-torus case (see
\cite{p}). Also, many authors have given contributions to the
conjecture in many different aspects. For more details, see, e.g.,
\cite{ad}--\cite{ap}, \cite{bp1} and~\cite{pa}.
\end{rem}

The paper is organized as follows. In Section~\ref{s2} we  study the
basic structure of the algebra $2^{[m]*}$ and the  basic properties
of the $\Z/2\Z$-valued M\"obius transform, and review the notions
 of Stanley--Reisner face rings and their Tor-algebras.
Section~\ref{s3} is the main part of this paper. We give the proof
of the algebra--combinatorics formula and estimate the lower bound
of $|\mathrm{supp}(\Mob(f))|$ therein. In Section~\ref{s4} we
introduce the definitions of $\mathcal{Z}_K$ and ${\Bbb
R}\mathcal{Z}_K$, and review the theorem of V. M. Buchstaber and T.
E. Panov on the cohomology of $\mathcal{Z}_K$. In particular, we
also calculate the  cohomology (as a graded ${\bf k}$-module)  of
the generalized moment-angle complex
$\mathcal{Z}_K^{(\underline{{\Bbb D}}, \underline{{\Bbb S}})}$, see
Subsection~\ref{cohomology} for the definition of
$\mathcal{Z}_K^{(\underline{{\Bbb D}}, \underline{{\Bbb S}})}$.
Finally we finish the proof of Theorem~\ref{main}  in
Section~\ref{s5}.

\section{M\"obius transform and Stanley--Reisner face
ring}\label{s2}

\subsection{An algebra over $\Z/2\Z$}

 Let
$2^{[m]}$ denote the power set of $[m]$, which is the set of all
subsets (including the empty set) of $[m]$. Then $2^{[m]}$ forms a
poset with respect to the inclusion $\subseteq$, and it is also a
boolean algebra under the set operations of union, intersection and
complement relative to $[m]$. Let
$$2^{[m]*}=\big\{f \big| f:2^{[m]}\To\Z/2\Z=\{0,1\}\big\}.$$
 Then $2^{[m]*}$ forms
{\em an algebra over $\Z/2\Z$}, where the addition is defined by
$(f+g)(a)=f(a)+g(a)$ and the multiplication is defined by $(f\cdot
g)(a)=f(a)g(a)$ for $ a\in 2^{[m]}$. Given a function $f\in
2^{[m]*}$, define
$$\mathrm{supp}(f):=f^{-1}(1)$$
which is called the {\em support} of $f$.

\begin{defn}
 For each $a\in 2^{[m]}$, the function $\delta_a\in 2^{[m]*}$ defined by
$$\delta_a(b)=\begin{cases}
1 & \text{if }  b=a\\
0 & \text{otherwise} \end{cases}$$ is called {\em the $a$-function}.
For each $i\in [m]$, the function $x_i\in 2^{[m]*}$ defined by
$$x_i(a)=1\Leftrightarrow i\in a $$  for  $\forall \ a\in 2^{[m]}$ is called  {\em the $i$-th coordinate
function}.
\end{defn}

\begin{lem}\label{basis}
$\{\delta_a| a\in 2^{[m]}\}$ forms a basis for $2^{[m]*}$.
\end{lem}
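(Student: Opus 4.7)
The plan is to verify the two defining properties of a basis—spanning and linear independence—directly from the definition of $\delta_a$, exploiting the fact that $2^{[m]}$ is a finite set so that every function $f\in 2^{[m]*}$ is supported on finitely many points.

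For spanning, I would take an arbitrary $f\in 2^{[m]*}$ and propose the explicit expression
\[
f=\sum_{a\in\mathrm{supp}(f)}\delta_a,
\]
then verify the equality pointwise: for any $b\in 2^{[m]}$, the right-hand side evaluated at $b$ counts (in $\Z/2\Z$) how many $a\in\mathrm{supp}(f)$ satisfy $a=b$, which is $1$ if $b\in\mathrm{supp}(f)$ and $0$ otherwise; by definition of $\mathrm{supp}(f)$ this equals $f(b)$. Since $\mathrm{supp}(f)\subseteq 2^{[m]}$ is finite, this sum is a legitimate finite $\Z/2\Z$-linear combination of $\delta_a$'s.

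For linear independence, I would suppose a finite relation $\sum_{a\in S}c_a\delta_a=0$ with coefficients $c_a\in\Z/2\Z$ and $S\subseteq 2^{[m]}$, and evaluate both sides at an arbitrary $b\in S$. By the defining property $\delta_a(b)=1\Leftrightarrow a=b$, the only surviving term on the left is $c_b$, forcing $c_b=0$. As $b\in S$ was arbitrary, every coefficient vanishes.

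There is no genuine obstacle here; the statement is a finite-set version of the standard fact that indicator functions form a basis for the space of functions valued in a field. The only minor thing to be careful about is phrasing the spanning argument so that it produces a \emph{finite} linear combination (even though $2^{[m]*}$ has the formal cardinality $2^{2^m}$, each individual $f$ has finite support, so the sum $\sum_{a\in\mathrm{supp}(f)}\delta_a$ is genuinely finite, as required for a vector-space basis).
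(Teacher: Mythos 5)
Your proof is correct and follows essentially the same approach as the paper: the paper simply writes $f=\sum_{a\in 2^{[m]}}f(a)\delta_a=\sum_{a\in\mathrm{supp}(f)}\delta_a$ to establish spanning and leaves linear independence implicit, whereas you spell out both halves (and the linear-independence check is the standard evaluate-at-$b$ argument you give). The only difference is your extra thoroughness; the mathematical content is the same.
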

\begin{proof}
This is because any $f\in 2^{[m]*}$ can be expressed as
$$f=\sum_{a\in
2^{[m]}}f(a)\delta_a=\sum_{a\in\mathrm{supp}(f)}\delta_a.$$
\end{proof}

By $\underline{1}$ one denotes the constant function such that
$\underline{1}(a)=1$ for all $a$ in $2^{[m]}$. Obviously,
$\underline{1}=\sum_{a\in 2^{[m]}}\delta_a$.  For each $a\in
2^{[m]}$, set
 $$\mu_a:=\begin{cases}
 \prod_{i\in
a}x_i & \text{if $a$ is nonempty}\\
\underline{1} &\text{if $a$ is empty .} \end{cases}$$  Then it is
easy to see that
\begin{lem}\label{mu-function}
Let $a, b\in 2^{[m]}$. Then
 $\mu_a(b)=1\Leftrightarrow a\subseteq b$.
\end{lem}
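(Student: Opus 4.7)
The plan is to unwind the definitions of $\mu_a$, $x_i$, and the product in $2^{[m]*}$, and observe that the claim reduces to the tautology that a product of $\{0,1\}$-values is $1$ if and only if each factor is $1$.

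First I would dispense with the boundary case $a=\emptyset$: by definition $\mu_\emptyset=\underline{1}$, so $\mu_\emptyset(b)=1$ for every $b\in 2^{[m]}$, while the condition $a\subseteq b$ is vacuously satisfied. Both sides of the equivalence are therefore always true.

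For nonempty $a$, I would invoke the pointwise definition of multiplication in the algebra $2^{[m]*}$ to expand
\[
\mu_a(b)=\Bigl(\prod_{i\in a}x_i\Bigr)(b)=\prod_{i\in a}x_i(b).
\]
Since each factor $x_i(b)$ takes values in $\Z/2\Z=\{0,1\}$, the product equals $1$ if and only if every factor equals $1$. By the definition of the $i$-th coordinate function, $x_i(b)=1$ exactly when $i\in b$. Hence $\mu_a(b)=1$ if and only if $i\in b$ for every $i\in a$, which is precisely the statement that $a\subseteq b$.

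There is no real obstacle here; the only thing to be careful about is that multiplication is pointwise in $\Z/2\Z$ rather than in some larger ring, so the product is genuinely $1$ exactly when no factor vanishes. The lemma is essentially definitional and will be used later as the bridge that translates the combinatorial incidence $a\subseteq b$ into an algebraic evaluation inside $2^{[m]*}$.
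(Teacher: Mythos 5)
Your proof is correct and is exactly the direct verification the paper alludes to when it states the lemma as something ``easy to see'' without further detail: split off the $a=\emptyset$ case, and for nonempty $a$ use pointwise multiplication in $2^{[m]*}$ together with the definition of $x_i$. There is nothing to add or compare; the argument is essentially forced by the definitions.
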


\begin{defn}
$f\in 2^{[m]*}$ is said to be  \emph{nice} if $\mathrm{supp}(f)$ is
an abstract simplicial complex on  vertex set $\bigcup_{a\in
\text{supp}(f)}a\subseteq[m]$. Note that an abstract simplicial
complex $K$ on a subset of $[m]$ is a collection of subsets in $[m]$
with the property that for each $a\in K$, all subsets (including the
empty set) of $a$ belong to $K$. Each $a\in K$ is called a {\em
simplex} and has dimension $|a|-1$. The dimension of $K$ is defined
as $\max_{a\in K}\{\dim a\}$.
\end{defn}

It is easy to see that $f$ is nice if and only if for each $a\in
\text{supp}(f)$, any subset $b\subseteq a$ has  the property
$f(b)=1$.

\vskip .2cm

Let $\mathcal{F}_{[m]}=\{f\in 2^{[m]*}| f \text{ is nice} \}$, and
$\mathcal{K}_{[m]}$ the set of all abstract simplicial complexes on
vertex set $A$ where  $A$ runs over all possible subsets in $[m]$.
\begin{prop}\label{1-1 corr}
All functions of $\mathcal{F}_{[m]}$ bijectively correspond to all
abstract simplicial complexes of $\mathcal{K}_{[m]}$.
\end{prop}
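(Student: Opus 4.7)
The plan is to exhibit a two-sided inverse pair between $\mathcal{F}_{[m]}$ and $\mathcal{K}_{[m]}$. In one direction, the natural candidate is $\Phi\colon \mathcal{F}_{[m]}\to\mathcal{K}_{[m]}$, $f\mapsto\mathrm{supp}(f)$; by the very definition of niceness, $\mathrm{supp}(f)$ is an abstract simplicial complex on the vertex set $\bigcup_{a\in\mathrm{supp}(f)}a\subseteq[m]$, so the map lands in $\mathcal{K}_{[m]}$. In the reverse direction, I would set $\Psi\colon\mathcal{K}_{[m]}\to\mathcal{F}_{[m]}$, $K\mapsto\chi_K:=\sum_{a\in K}\delta_a$, the characteristic function of $K$ written in the basis supplied by Lemma~\ref{basis}.

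The verification then splits into two short checks. First, $\Psi(K)$ is nice: its support equals $K$ by construction, and $K\in\mathcal{K}_{[m]}$ is a simplicial complex by hypothesis, so the closure-under-subsets condition required of a nice function is automatic. Second, $\Phi\circ\Psi=\mathrm{id}_{\mathcal{K}_{[m]}}$ is immediate from $\mathrm{supp}(\chi_K)=K$. For the other composition, $\Psi\circ\Phi=\mathrm{id}_{\mathcal{F}_{[m]}}$, I would invoke Lemma~\ref{basis} directly: any $f\in2^{[m]*}$ admits the expansion $f=\sum_{a\in\mathrm{supp}(f)}\delta_a=\chi_{\mathrm{supp}(f)}=\Psi(\Phi(f))$.

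There is essentially no obstacle here; the proposition is a bookkeeping consequence of the definitions together with the basis expansion in Lemma~\ref{basis}. The one point worth a brief comment is the matching of vertex sets on the two sides: in $\mathcal{F}_{[m]}$ the vertex set associated to a nice $f$ is forced to be $\bigcup_{a\in\mathrm{supp}(f)}a$, while in $\mathcal{K}_{[m]}$ the underlying vertex set $A$ is allowed to range over all subsets of $[m]$. Since every $K\in\mathcal{K}_{[m]}$ has a canonical minimal vertex set, namely the union of its simplices, and it is this minimal choice that $\Phi$ and $\Psi$ use, the two parametrizations agree and no ambiguity arises.
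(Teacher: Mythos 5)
Your proposal is correct and matches the paper's own argument: both exhibit $f\mapsto\mathrm{supp}(f)$ and $K\mapsto\sum_{a\in K}\delta_a$ as a mutually inverse pair, using the basis expansion from Lemma~\ref{basis}. You simply spell out the routine verifications that the paper leaves as ``clearly.''
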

\begin{proof}
Clearly,   $f\mapsto\mathrm{supp}(f)$ gives a bijection
$\mathcal{F}_{[m]}\To\mathcal{K}_{[m]}$, whose inverse is
$K\mapsto\sum_{a\in K}\delta_a$.
\end{proof}

\subsection{M\"obius transform} Based upon Proposition~\ref{1-1 corr}, we shall carry out our work from the viewpoint of functional analysis.

\begin{defn}
 The map $\Mob: 2^{[m]*}\To 2^{[m]*}$ given by the
formula
$$\Mob(f)(a)=\sum_{b\subseteq a}f(b)$$  for all  $f\in 2^{[m]*}$  and $a\in
2^{[m]}$ is called the \emph{$\Z/2\Z$-valued M\"obius transform}.
\end{defn}

\begin{lem}\label{mobius}
$\mathrm{\Mob}$ is a linear transform such that
$\mathrm{\Mob^2=\mathrm{id}}$. In particular,
\begin{equation}\label{eq:Mob(del)}\mathrm{\Mob}(\delta_a)=\mu_a\end{equation} for any ${a\in
2^{[m]}}$. Consequently, $\mathrm{\Mob}(\mu_a)=\delta_a$.
\end{lem}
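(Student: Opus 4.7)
The plan is to reduce everything to computations on the basis $\{\delta_a : a \in 2^{[m]}\}$ provided by Lemma~\ref{basis}, and verify the three assertions in turn. Linearity of $\Mob$ follows immediately from the definition $\Mob(f)(a) = \sum_{b \subseteq a} f(b)$, since the sum of functions evaluated at each $b$ commutes with the indexed sum. Once linearity is in hand, it suffices to check $\Mob^2(\delta_a) = \delta_a$ for every $a \in 2^{[m]}$.

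Next, I would directly evaluate $\Mob(\delta_a)$ on an arbitrary $c \in 2^{[m]}$. By definition, $\Mob(\delta_a)(c) = \sum_{b \subseteq c} \delta_a(b)$, and the $\delta$-function is nonzero exactly at $b = a$, so this sum equals $1$ precisely when $a \subseteq c$. By Lemma~\ref{mu-function}, that is the defining condition for $\mu_a(c) = 1$, giving the identity $\Mob(\delta_a) = \mu_a$.

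The main technical step is showing $\Mob(\mu_a) = \delta_a$, which, combined with the previous identity, will yield $\Mob^2(\delta_a) = \Mob(\mu_a) = \delta_a$ and hence $\Mob^2 = \id$ by linearity. For this I would compute
\[
\Mob(\mu_a)(c) \;=\; \sum_{b \subseteq c} \mu_a(b) \;=\; \#\{\,b : a \subseteq b \subseteq c\,\} \pmod 2.
\]
If $a \not\subseteq c$, the count is zero. If $a \subseteq c$, then the sets $b$ with $a \subseteq b \subseteq c$ are in bijection with subsets of $c \setminus a$, so there are $2^{|c|-|a|}$ of them. The crucial parity observation is that this power of $2$ is even unless $|c|-|a|=0$, i.e.\ unless $a=c$; thus $\Mob(\mu_a)(c) = 1$ iff $c = a$, which is exactly $\delta_a$.

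The only real obstacle is the parity argument in the last step, and it is elementary: the whole proof amounts to the two evaluations above plus the observation that $\Z/2\Z$-coefficients conveniently collapse the boolean-lattice Möbius inversion to the statement $\Mob^2=\id$ without any sign bookkeeping.
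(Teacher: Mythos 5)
Your proof is correct and rests on the same key observation as the paper's: the interval $[a,c]$ in the boolean lattice has $2^{|c|-|a|}$ elements, which is even unless $a=c$, so the double sum collapses mod $2$. The only difference is organizational — the paper proves $\Mob^2=\id$ directly for a general $f$ and then deduces $\Mob(\mu_a)=\delta_a$ from $\Mob(\delta_a)=\mu_a$, whereas you verify both basis identities by hand and infer $\Mob^2=\id$ by linearity; both routes are equally valid.
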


\begin{proof}
The linearity of $\mathrm{\Mob}$ is obvious. To check that
$\Mob^2=\mathrm{id}$, take $f\in 2^{[m]*}$, one has that for any
$a\in 2^{[m]}$
\begin{equation}\label{equation}
\mathcal{M}^2(f)(a)=\sum_{b\subseteq a}\sum_{c\subseteq b}f(c)
                =\sum_{c\subseteq a}\sum_{b\in[c,a]}f(c)
                =f(a)+\sum_{c\subsetneq a}\sum_{b\in[c,a]}f(c)
\end{equation}
For every term in the latter sum of (\ref{equation}), from
$c\subsetneq a$ we see that $[c,a]$ is a boolean subalgebra of
$2^{[m]}$ which has $2^k$ elements for some $k>0$. So the sum
$\sum_{b\in[c,a]}f(c)=0$ in $\Z/2\Z$. Therefore
$\mathcal{M}^2(f)(a)=f(a)$ for any $a\in 2^{[m]}$, so
$\mathcal{M}^2(f)=f$ as desired. The equation (\ref{eq:Mob(del)}) is
a direct calculation by Lemma~\ref{mu-function}.
\end{proof}

As a consequence of Lemmas~\ref{basis} and~\ref{mobius}, one has

\begin{cor}\label{basis'}
 ${\{\mu_a | a\in 2^{[m]}\}}$ is also a basis of $2^{[m]*}$.
\end{cor}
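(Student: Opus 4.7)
The plan is to exploit the fact, already proved in Lemma~\ref{mobius}, that $\Mob$ is an involutive linear endomorphism of $2^{[m]*}$. From the relation $\Mob^2 = \mathrm{id}$ it follows immediately that $\Mob$ is a bijection (with itself as its inverse), and hence a $\Z/2\Z$-linear automorphism of the $\Z/2\Z$-vector space $2^{[m]*}$. Any linear automorphism of a vector space sends bases to bases, so to deduce the corollary it suffices to exhibit $\{\mu_a\}$ as the image under $\Mob$ of a known basis.

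The basis already in hand is $\{\delta_a \mid a \in 2^{[m]}\}$, supplied by Lemma~\ref{basis}. The second ingredient needed is the identity
\[
    \Mob(\delta_a) = \mu_a \qquad \text{for every } a \in 2^{[m]},
\]
which is precisely equation~(\ref{eq:Mob(del)}) of Lemma~\ref{mobius}. Thus the set $\{\mu_a\}$ is exactly the image of the basis $\{\delta_a\}$ under the automorphism $\Mob$, and is therefore itself a basis of $2^{[m]*}$.

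There is no serious obstacle here; the content of the corollary is entirely extracted from the two preceding lemmas, and the proof amounts to one sentence: an involutive linear map is a linear isomorphism, and an isomorphism takes a basis to a basis. The only point that could conceivably require a line of justification is why $\Mob^2 = \mathrm{id}$ implies bijectivity, which is immediate since $\Mob$ is its own two-sided inverse.
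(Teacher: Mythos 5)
Your proof is correct and matches the paper's argument exactly: the paper also deduces the corollary from Lemma~\ref{basis} and Lemma~\ref{mobius}, using that $\Mob$ is an involutive (hence bijective) linear map sending $\delta_a$ to $\mu_a$. Nothing to add.
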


\begin{rem}
By definition of $\Mob$, if $f(\emptyset)=1$ then
$\Mob(f)(\emptyset)=1$.
\end{rem}

In the next two subsections we shall review the Stanley--Reisner face
rings and Tor-algebras. Our main reference is the book by E. Miller
and  B. Sturmfels (\cite{ms}).
\subsection{Stanley--Reisner face
ring}\label{st}

Now let $f\in \mathcal{F}_{[m]}$ be a nice function such that
$K_f=\text{supp}(f)\in\mathcal{K}_{[m]}$ is an abstract simplicial
complex on $[m]$ (so $\bigcup_{a\in K_f}a=[m]$).

\vskip .2cm

 Following the notions of \cite{ms}, let ${\bf k}[{\bf v}]={\bf
k}[v_1, ..., v_m]$ be the polynomial algebra over ${\bf k}$ on $m$
indeterminates ${\bf v}=v_1, ..., v_m$. Each monomial in ${\bf
k}[{\bf v}]$ has the form of  ${\bf v}^{\bf a}=v_1^{a_1}\cdots
v_m^{a_m}$ for a vector ${\bf a}=(a_1, ..., a_m)\in {\Bbb N}^m$ of
nonnegative integers. Thus, ${\bf k}[{\bf v}]$ is ${\Bbb
N}^m$-graded, i.e., ${\bf k}[{\bf v}]$ is a direct sum
$\bigoplus_{{\bf a}\in {\Bbb N}^m}{\bf k}[{\bf v}]_{\bf a}$ with
${\bf k}[{\bf v}]_{\bf a}\cdot{\bf k}[{\bf v}]_{\bf b}={\bf k}[{\bf
v}]_{{\bf a}+{\bf b}}$ where ${\bf k}[{\bf v}]_{\bf a}={\bf k}\{{\bf
v}^{\bf a}\}$ is the  vector space over ${\bf k}$, spanned by ${\bf
v}^{\bf a}$. Generally, a ${\bf k}[{\bf v}]$-module $M$ is {\em
${\Bbb N}^m$-graded} if $M = \bigoplus_{{\bf b}\in {\Bbb N}^m}M_{\bf
b}$ and ${\bf v}^{\bf a}\cdot M_{\bf b}\subseteq M_{{\bf a}+{\bf
b}}$. Given a vector ${\bf a}\in {\Bbb N}^m$, by ${\bf k}[{\bf
v}](-{\bf a})$ one denotes the free ${\bf k}[{\bf v}]$-module
generated in degree ${\bf a}$. So ${\bf k}[{\bf v}](-{\bf a})$ is
isomorphic to the ideal $\langle{\bf v}^{\bf a}\rangle$ as ${\Bbb
N}^m$-graded modules. Furthermore, a free ${\Bbb N}^m$-graded module
of rank $r$ is isomorphic to the direct sum ${\bf k}[{\bf v}](-{\bf
a}_1)\bigoplus\cdots\bigoplus{\bf k}[{\bf v}](-{\bf a}_r)$ for some
vectors ${\bf a}_1, ..., {\bf a}_r\in {\Bbb N}^m$.
 \vskip .2cm

A monomial ${\bf v}^{\bf a}$ in ${\bf k}[{\bf v}]$ is said to be
{\em squarefree} if every coordinate of ${\bf a}$ is 0 or 1, i.e.,
${\bf a}\in \{0, 1\}^m$ called a {\em squarefree vector}. Clearly,
all elements  in $2^{[m]}$ bijectively correspond to all vectors  in
$\{0, 1\}^m$ by mapping $\xi: a\in 2^{[m]}\longmapsto{\bf a}\in
\{0,1\}^m$, where ${\bf a}$ has entry 1 in the $i$-th place when
$i\in a$, and 0 in all other entries. With this understanding, for
$a\in 2^{[m]}$, one may write ${\bf v}^a=\prod_{i\in a}v_i$.
 Then the
{\em Stanley--Reisner ideal} of $K_f$ is defined as $I_{K_f}=\langle
{\bf v}^\tau| \tau\not\in K_f\rangle$. Furthermore, the quotient
ring
$${\bf k}(K_f)={\bf k}[{\bf v}]/I_{K_f}$$
is called the {\em Stanley--Reisner face ring}.

\begin{example}
If $K_f=2^{[m]}$ then ${\bf k}(K_f)={\bf k}[{\bf v}]$, and if
$K_f=2^{[m]}\setminus\{[m]\}$ then ${\bf k}(K_f)={\bf k}[{\bf
v}]/\langle{\bf v}^{[m]}\rangle$.
\end{example}

 ${\bf k}(K_f)$ is a finitely generated graded ${\bf k}[{\bf
v}]$-module. Hilbert's syzygy theorem tells us that there exists a
 free resolution of ${\bf k}(K_f)$ of length at most $m$.
One knows from \cite[Section 1.4]{ms} that ${\bf k}(K_f)$ is  ${\Bbb
N}^m$-graded  and it has an ${\Bbb N}^m$-graded minimal free
resolution as follows
\begin{equation}\label{fr}
0\longleftarrow {\bf k}(K_f)\longleftarrow F_0
\stackrel{\phi_1}{\longleftarrow}
F_1\longleftarrow\cdots\longleftarrow
F_{h-1}\stackrel{\phi_h}{\longleftarrow} F_h\longleftarrow 0
\end{equation}
  where each homomorphism $\phi_i$ is ${\Bbb N}^m$-graded degree-preserving.
Since each $F_i$ is a free ${\Bbb N}^m$-graded ${\bf k}[{\bf
v}]$-module,  one may write $F_i=\bigoplus_{{\bf a}\in {\Bbb
N}^m}{\bf k}[{\bf v}](-{\bf a})^{\beta_{i, {\bf a}}^{{\bf k}(K_f)}}$
where $\beta_{i, {\bf a}}^{{\bf k}(K_f)}\in {\Bbb N}$ (see also
\cite[Section 1.5]{ms}). By \cite[Corollary 1.40]{ms}, if ${\bf
a}\in {\Bbb N}^m$ is not  squarefree, then $\beta_{i, {\bf a}}^{{\bf
k}(K_f)}=0$ for all $i$. Thus, we only need to consider those
$\beta_{i, {\bf a}}^{{\bf k}(K_f)}$ with ${\bf a}\in \{0,1\}^m$.
Throughout the following  we shall write $\beta_{i,a}^{{\bf
k}(K_f)}:=\beta_{i, {\bf a}}^{{\bf k}(K_f)}$ where $a\in 2^{[m]}$
with $\xi(a)={\bf a}$.
\begin{defn}[{cf.~\cite[Definition 1.29]{ms}}]\label{betti n}
The number $\beta_{i, {a}}^{{\bf k}(K_f)}$ is called the {\em $(i,
a)$-th Betti number} of ${\bf k}(K_f)$.
\end{defn}

\subsection{Tor-algebra of ${\bf k}(K_f)$}\label{tor}
Applying the functor $\otimes_{{\bf k}[{\bf v}]}{\bf k}$ to the
sequence (\ref{fr}),  one may obtain the following chain complex of
${\Bbb N}^m$-graded ${\bf k}[{\bf v}]$-modules:
$$
0\longleftarrow  F_0\otimes_{{\bf k}[{\bf v}]}{\bf k}
\stackrel{\phi'_1}{\longleftarrow} F_1\otimes_{{\bf k}[{\bf v}]}{\bf
k}\longleftarrow\cdots\stackrel{\phi'_h}{\longleftarrow}
F_h\otimes_{{\bf k}[{\bf v}]}{\bf k}\longleftarrow 0.
$$
Since the free resolution (\ref{fr}) is minimal,  the differentials
$\phi'_i$'s become zero homomorphisms.  Then the $i$-th homology
module of the above chain complex is ${{\ker\phi'_i}\over
{\text{Im}\phi'_{i+1} }}=F_i\otimes_{{\bf k}[{\bf v}]}{\bf k}$,
denoted by $\text{Tor}^{{\bf k}[{\bf v}]}_i({\bf k}(K_f), {\bf k})$.
Namely,  $\text{Tor}^{{\bf k}[{\bf v}]}_i({\bf k}(K_f), {\bf
k})=F_i\otimes_{{\bf k}[{\bf v}]}{\bf k}$ so
$$\dim_{\bf k}\text{Tor}^{{\bf k}[{\bf v}]}_i({\bf k}(K_f), {\bf
k})=\text{rank} F_i=\sum_{a\in 2^{[m]}}\beta_{i,a}^{{\bf k}(K_f)}.$$
This also implies that for ${\bf a}\in {\Bbb N}^m$ with ${\bf
a}\not\in \{0,1\}^m$, $\text{Tor}^{{\bf k}[{\bf v}]}_i({\bf k}(K_f),
{\bf k})_{\bf a}=0$, and so  $\text{Tor}^{{\bf k}[{\bf v}]}_i({\bf
k}(K_f), {\bf k})$ can  be decomposed into a direct sum
$$\bigoplus_{a\in 2^{[m]}}\text{Tor}^{{\bf k}[{\bf v}]}_{i}({\bf
k}(K_f), {\bf k})_a$$ with $\dim_{\bf k}\text{Tor}^{{\bf k}[{\bf
v}]}_i({\bf k}(K_f), {\bf k})_a=\beta_{i,a}^{{\bf k}(K_f)}$ (see
also \cite[Lemma 1.32]{ms}). Furthermore, one has that
$$\text{Tor}^{{\bf k}[{\bf v}]}({\bf
k}(K_f), {\bf k})=\bigoplus_{i=0}^h\text{Tor}_i^{{\bf k}[{\bf
v}]}({\bf k}(K_f), {\bf k})=\bigoplus_{i\in [0, h]\cap {\Bbb N},
a\in 2^{[m]}}\text{Tor}^{{\bf k}[{\bf v}]}_{i}({\bf k}(K_f), {\bf
k})_a$$ which is a bigraded ${\bf k}[{\bf v}]$-module. Combining
with the above arguments, this gives
\begin{prop}\label{tor-betti}
$\sum\limits_{i=0}^h\dim_{\bf k}\text{\rm Tor}_i^{{\bf k}[{\bf
v}]}({\bf k}(K_f), {\bf k})=\sum\limits_{i=0}^h\sum\limits_{a\in
2^{[m]}}\beta_{i,a}^{{\bf k}(K_f)}.$
\end{prop}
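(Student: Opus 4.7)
The plan is to unpack the two sides of the equality using only the minimal free resolution (\ref{fr}) together with the vanishing of non-squarefree Betti numbers that was recorded just before the statement.

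First I would fix the resolution (\ref{fr}) and write out $F_i = \bigoplus_{{\bf a} \in \{0,1\}^m} {\bf k}[{\bf v}](-{\bf a})^{\beta_{i,a}^{{\bf k}(K_f)}}$, where the restriction to squarefree ${\bf a}$ is legitimate by \cite[Corollary 1.40]{ms} and the identification $\xi: a \leftrightarrow {\bf a}$ turns this sum into a sum indexed by $a \in 2^{[m]}$. Tensoring $F_i$ with ${\bf k}$ over ${\bf k}[{\bf v}]$ collapses each summand ${\bf k}[{\bf v}](-{\bf a})$ to a one-dimensional ${\bf k}$-vector space, so as a ${\bf k}$-vector space $F_i \otimes_{{\bf k}[{\bf v}]}{\bf k}$ has dimension $\sum_{a \in 2^{[m]}} \beta_{i,a}^{{\bf k}(K_f)}$.

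The key step is to identify this dimension with $\dim_{\bf k} \text{Tor}_i^{{\bf k}[{\bf v}]}({\bf k}(K_f),{\bf k})$. For this I would invoke the defining property of a minimal ${\Bbb N}^m$-graded free resolution: each matrix representing $\phi_i$ has entries in the irrelevant maximal ideal $\mathfrak{m} = \langle v_1,\dots,v_m\rangle$, since otherwise one could cancel a free summand and produce a shorter resolution. Consequently the induced differentials $\phi'_i = \phi_i \otimes_{{\bf k}[{\bf v}]}\mathrm{id}_{\bf k}$ are zero, and the homology of the tensored complex at position $i$ is simply $F_i \otimes_{{\bf k}[{\bf v}]}{\bf k}$ itself. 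This is the identification already recorded in Subsection~\ref{tor}; I would just make the minimality argument explicit here.

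With those two computations in hand, summing over $i$ from $0$ to $h$ and using additivity of $\dim_{\bf k}$ gives the claimed identity. The only nontrivial input is really the minimality of (\ref{fr}), which forces the tensored differentials to vanish; everything else is bookkeeping over the $\{0,1\}^m$ grading. I do not expect any serious obstacle, since this is essentially the standard interpretation of Betti numbers as $\dim_{\bf k}\text{Tor}$.
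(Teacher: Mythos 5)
Your proof is correct and follows essentially the same route as the paper: tensor the minimal free resolution with ${\bf k}$, observe that minimality kills the induced differentials so that $\text{Tor}_i^{{\bf k}[{\bf v}]}({\bf k}(K_f),{\bf k}) = F_i \otimes_{{\bf k}[{\bf v}]}{\bf k}$, count ranks via the squarefree Betti numbers, and sum over $i$. You merely spell out more explicitly the standard reason the tensored differentials vanish (entries lie in the irrelevant maximal ideal), which the paper takes as given.
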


\section{M\"obius transform of abstract simplicial complexes and Betti numbers of face
rings}\label{s3}

\subsection{An algebra--combinatorics formula}
Following Subsections~\ref{st}--\ref{tor}, now let us  investigate
the essential relationship between the M\"obius transform $\Mob(f)$
of $f$ and the Betti numbers of the face ring ${\bf k}(K_f)$ of
$K_f$.

\begin{thm}[Algebra--combinatorics formula]\label{p:alg-comb}
$$\Mob(f)=\sum_{i=0}^h\sum_{a\in 2^{[m]}} \beta_{i, a}^{{\bf k}(K_f)}\delta_a.$$
\end{thm}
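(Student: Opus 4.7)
The plan is to verify the claimed identity of functions in $2^{[m]*}$ pointwise. Since $\{\delta_a\}_{a\in 2^{[m]}}$ is a basis (Lemma~\ref{basis}), the statement amounts to the congruence
$$\Mob(f)(a)\;\equiv\;\sum_{i=0}^h \beta_{i,a}^{{\bf k}(K_f)}\pmod{2}\qquad(\forall\,a\in 2^{[m]}).$$
Writing $K_a := \{\sigma\in K_f : \sigma\subseteq a\}$ for the induced subcomplex of $K_f$ on vertex set $a$ (which contains $\emptyset$ since $K_f$ does), I would show both sides are congruent to $|K_a|\pmod{2}$.

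The left-hand side is immediate from the definitions: since $f$ is the characteristic function of $K_f$,
$$\Mob(f)(a) \;=\; \sum_{b\subseteq a}f(b) \;=\; \bigl|\{\sigma\in K_f : \sigma\subseteq a\}\bigr| \;=\; |K_a|\pmod{2}.$$

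For the right-hand side, I would invoke Hochster's formula (a standard identification recorded in~\cite{ms}), which expresses the multigraded Betti numbers of the Stanley--Reisner ring as reduced simplicial homology of induced subcomplexes:
$$\beta_{i,a}^{{\bf k}(K_f)} \;=\; \dim_{{\bf k}}\tilde{H}_{|a|-i-1}(K_a;{\bf k}).$$
Summing over $i$ from $0$ to $h$ and reindexing by $j=|a|-i-1$ gives $\sum_{i=0}^h\beta_{i,a}^{{\bf k}(K_f)} = \sum_{j}\dim_{{\bf k}}\tilde{H}_j(K_a;{\bf k})$. Reducing modulo $2$ and using $(-1)^j\equiv 1\pmod 2$, this sum is congruent to the reduced Euler characteristic $\tilde\chi(K_a)=\sum_{\sigma\in K_a}(-1)^{\dim\sigma}$ (with $\dim\emptyset=-1$), which mod $2$ is exactly the total face count $|K_a|$. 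Matching the two computations yields the required pointwise congruence.

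The main obstacle is the proper invocation of Hochster's formula --- in particular the homological degree shift $|a|-i-1$ and the bookkeeping for the empty simplex in the reduced Euler characteristic; once this is settled, the remainder reduces to the one-line observation that the total dimension of (co)homology and the total face count have the same parity.
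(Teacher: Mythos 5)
Your proof is correct, but it takes a genuinely different route from the paper's. The paper works directly with the minimal free resolution of ${\bf k}(K_f)$: restricting to a fixed squarefree degree $b$ gives an exact sequence of finite-dimensional ${\bf k}$-vector spaces, whose vanishing Euler characteristic yields $f(b)=D_b=\sum_i(-1)^id_{b,i}$ with $d_{b,i}=\sum_{a\subseteq b}\beta_{i,a}$; reducing mod $2$ gives $f=\sum_{i,a}\beta_{i,a}\mu_a$, and a single application of $\Mob$ (using $\Mob(\mu_a)=\delta_a$) finishes. You instead evaluate $\Mob(f)$ pointwise and invoke Hochster's formula to rewrite $\sum_i\beta_{i,a}$ as the total dimension of reduced (co)homology of the full subcomplex $K|_a$, then pass to its Euler characteristic and identify the mod-$2$ value with the face count $|K|_a|$; the left-hand side $\Mob(f)(a)=\sum_{b\subseteq a}f(b)$ is the same face count by definition. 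Both arguments rest on the same underlying parity observation (an alternating sum equals a total sum mod $2$, and both equal the Euler characteristic), but they apply it at different levels: the paper applies it to the degree-$b$ strand of the resolution, keeping the proof self-contained within the material of Section~\ref{st}, while you apply it to the simplicial chain complex of $K|_a$ after importing Hochster's theorem. Your route is conceptually transparent in that it makes both sides manifestly equal to the same combinatorial quantity $|K|_a|\bmod 2$, at the cost of leaning on a nontrivial theorem that the paper only uses later (in Section~\ref{s4}) and does not need here. One small bookkeeping point you handled correctly and that is worth flagging as the place where such an argument could go wrong: the reduced Euler characteristic $\tilde\chi(K|_a)=\sum_{\sigma\in K|_a}(-1)^{\dim\sigma}$ must include the empty face with $\dim\emptyset=-1$, which is exactly what makes it congruent mod $2$ to the total count $|K|_a|$ rather than to $|K|_a|-1$.
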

\begin{proof}
For any $b\in 2^{[m]}$, the exact sequence (\ref{fr}) in degree $b$
reads into
$$0\longleftarrow {\bf k}^{D_{b}}\longleftarrow {\bf k}^{d_{b,0}}\longleftarrow {\bf k}^{d_{b,1}}\longleftarrow
{\bf k}^{d_{b,2}}\longleftarrow\cdots\longleftarrow {\bf k}^{d_{b,
h}}\longleftarrow 0$$ where $D_{b}=\dim_{\bf k}{\bf k}(K_f)_b$ and
$d_{b,i}=\dim_{\bf k}(F_i)_b$.  Since the above sequence is also
exact, we have that $D_{b}=\sum_{i=0}^h {(-1)}^{i} d_{i,b}$. An easy
observation shows that $f(b)=\dim_{\bf k}{\bf k}(K_f)_b=D_b$, and
$d_{b, i}=\sum_{a\subseteq b} \beta_{i,a}^{{\bf k}(K_f)}$ (this is
induced from $F_i=\bigoplus_{{\bf a}\in {\Bbb N}^m}{\bf k}[{\bf
v}](-{\bf a})^{\beta_{i, {\bf a}}^{{\bf k}(K_f)}}$).

\vskip .2cm

Now let us work in integers modulo 2. We then have that
 $D_{b}=\sum_{i=0}^h
d_{i,b}$, and further
$$
f(b)= \sum_{i=0}^h \sum_{a\subseteq b} \beta_{i,a}^{{\bf k}(K_f)}
      =\sum_{i=0}^h \sum_{a\in 2^{[m]}}  \beta_{i,a}^{{\bf k}(K_f)} \mu_a(b).
$$
So \begin{equation*}\label{formu} f=\sum_{i=0}^h\sum_{a\in
2^{[m]}}\beta_{i,a}^{{\bf k}(K_f)}\mu_a. \end{equation*}
Applying $\Mob$ to the above equality  and noting that
$\Mob(\mu_a)=\delta_a$, we arrive at the required formula.
\end{proof}

\begin{cor}\label{bt}
Let $f\in 2^{[m]*}$ be a nice function such that
$K_f=\mathrm{supp}(f)\in\mathcal{K}_{[m]}$ is an abstract simplicial
complex on $[m]$. Then
$$
|\mathrm{supp}(\Mob(f))|\leq \sum_{i=0}^h\sum_{a\in 2^{[m]}} \beta_{i, a}^{{\bf k}(K_f)}.$$
\end{cor}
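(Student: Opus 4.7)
The plan is to read off the corollary as a direct numerical consequence of Theorem~\ref{p:alg-comb}. The identity
$$\Mob(f)=\sum_{i=0}^h\sum_{a\in 2^{[m]}} \beta_{i, a}^{{\bf k}(K_f)}\delta_a$$
lives in the $\Z/2\Z$-algebra $2^{[m]*}$, so the coefficients $\beta_{i,a}^{{\bf k}(K_f)}$ on the right are tacitly reduced modulo $2$. Evaluating at an arbitrary $a\in 2^{[m]}$ and using $\delta_{a'}(a)=1\Leftrightarrow a'=a$ from Lemma~\ref{basis} yields
$$\Mob(f)(a)=\sum_{i=0}^h\beta_{i,a}^{{\bf k}(K_f)}\pmod 2.$$

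Next I would observe that $a\in\mathrm{supp}(\Mob(f))$ is, by definition, the condition $\Mob(f)(a)=1\in\Z/2\Z$. By the displayed equality this forces $\sum_{i=0}^h\beta_{i,a}^{{\bf k}(K_f)}$ to be odd; in particular, at least one summand is nonzero, so the integer sum satisfies $\sum_{i=0}^h\beta_{i,a}^{{\bf k}(K_f)}\geq 1$. Therefore
$$|\mathrm{supp}(\Mob(f))|=\sum_{a\in\mathrm{supp}(\Mob(f))}1\leq\sum_{a\in\mathrm{supp}(\Mob(f))}\sum_{i=0}^h\beta_{i,a}^{{\bf k}(K_f)}\leq\sum_{a\in 2^{[m]}}\sum_{i=0}^h\beta_{i,a}^{{\bf k}(K_f)},$$
where the last step drops the restriction on $a$ and uses the nonnegativity of Betti numbers.

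There is no real obstacle here: the only subtlety to flag is the mod-$2$ versus integer distinction — the coefficients $\beta_{i,a}^{{\bf k}(K_f)}$ on the right-hand side of Theorem~\ref{p:alg-comb} are interpreted mod $2$ since we are working in $2^{[m]*}$, but the corollary's right-hand side is a sum of genuine nonnegative integers. The bridge between the two is precisely the trivial inequality $[n\equiv 1\pmod 2]\leq n$ for $n\in\N$, applied coordinatewise in $a$. Once this is noted the proof is a one-line estimate.
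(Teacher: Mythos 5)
Your proof is correct and follows essentially the same route as the paper's: both read the coefficient of $\delta_a$ in Theorem~\ref{p:alg-comb}, note that for $a\in\mathrm{supp}(\Mob(f))$ the integer $\sum_{i=0}^h\beta_{i,a}^{{\bf k}(K_f)}$ must be odd (hence $\geq 1$), and sum over $a$. Your explicit flagging of the mod-$2$ vs.\ integer distinction is a nice touch but is the same observation the paper makes implicitly.
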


\begin{proof}
From the formula of Theorem~\ref{p:alg-comb}, one has that
$$\Mob(f)=\sum_{i=0}^h\sum_{a\in 2^{[m]}} \beta_{i, a}^{{\bf k}(K_f)}\delta_a=
\sum_{a\in 2^{[m]}}\Big(\sum_{i=0}^h \beta_{i, a}^{{\bf
k}(K_f)}\Big)\delta_a$$ so
 for any
$a\in \mathrm{supp}(\Mob(f))$,  $\sum_{i=0}^h\beta_{i, a}^{{\bf
k}(K_f)}$ must be odd and nonnegative, and then
$\sum_{i=0}^h\beta_{i,a}^{{\bf k}(K_f)}\geq 1$.
 Therefore
$$\sum_{i=0}^h\sum_{a\in 2^{[m]}} \beta_{i, a}^{{\bf k}(K_f)}\geq \sum_{a\in \mathrm{supp}(\Mob(f))}
\sum_{i=0}^h\beta_{i,a}^{{\bf k}(K_f)}\geq \sum_{a\in
\mathrm{supp}(\Mob(f))} 1=|\mathrm{supp}(\Mob(f))|$$ as desired.
\end{proof}

\subsection{The estimation of the lower bound of
$|\text{supp}(\Mob(f))|$} We shall upbuild a  method of compressing
$\mathrm{supp}(f)$ to get the desired lower bound of
$|\text{supp}(\Mob(f))|$.

\begin{defn}
Fix $k\in[m]$.  $f\in\mathcal{F}_{[m]}$ is \emph{$k$-th extendable}
if
\begin{enumerate}
\item[(k-1)] $f(\{k\})=1$;
\item[(k-2)] $\Mob(f)\cdot x_k\neq 0$ in $2^{[m]*}$.
\end{enumerate}
 The linear transformation $E_k:
2^{[m]*}\To\ 2^{[m]*}$ determined by
$\mu_a\mapsto\mu_{a\setminus\{k\}}$ is called the \emph{$k$-th
compression-operator}. $f\in\mathcal{F}_{[m]}$ is said to be {\em
extendable} if there is some $k\in [m]$ such that $f$ is
$k$-th extendable; otherwise, $f$ is said to be {\em non-extendable}.
\end{defn}
Introducing the map $\epsilon_k: 2^{[m]} \To 2^{[m]}$ defined by
$a\mapsto a\cup\{k\}$, we derive the following formula for $E_k$.

\begin{lem}\label{E_k}
 For any $f\in 2^{[m]*}$ we have
\begin{equation*}E_k(f)=f\circ\epsilon_k.\end{equation*}
\end{lem}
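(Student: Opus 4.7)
The plan is to verify the identity on the basis $\{\mu_a : a \in 2^{[m]}\}$ guaranteed by Corollary~\ref{basis'} and then extend by linearity. Note that the map $T: f \mapsto f\circ \epsilon_k$ is itself $\Z/2\Z$-linear, since addition in $2^{[m]*}$ is defined pointwise and $\epsilon_k$ is a fixed set-theoretic map; composition with $\epsilon_k$ therefore commutes with addition. Since $E_k$ is linear by definition, it suffices to check that $E_k(\mu_a) = \mu_a\circ\epsilon_k$ for every $a \in 2^{[m]}$.

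For the basis check, fix $a, b \in 2^{[m]}$ and evaluate both sides at $b$. By definition of $E_k$, the left side is $E_k(\mu_a)(b) = \mu_{a\setminus\{k\}}(b)$, which by Lemma~\ref{mu-function} equals $1$ if and only if $a\setminus\{k\} \subseteq b$. The right side is $\mu_a(\epsilon_k(b)) = \mu_a(b\cup\{k\})$, which by the same lemma equals $1$ if and only if $a \subseteq b\cup\{k\}$. The equivalence
\[
a\setminus\{k\} \subseteq b \ \Longleftrightarrow\ a \subseteq b\cup\{k\}
\]
is an elementary set-theoretic fact: the elements of $a$ other than $k$ are precisely those that must lie in $b$ when we allow $k$ itself to come from $\{k\}$. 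This gives $E_k(\mu_a)(b) = (\mu_a\circ\epsilon_k)(b)$ for all $b$, hence $E_k(\mu_a) = \mu_a\circ \epsilon_k$.

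Combining the basis identity with the linearity of both $E_k$ and $T$ yields $E_k(f) = f\circ\epsilon_k$ for all $f \in 2^{[m]*}$, as required. There is no genuine obstacle here: the only point worth flagging is the (trivial) linearity of $f\mapsto f\circ\epsilon_k$, which justifies reducing to the basis on which $E_k$ was defined.
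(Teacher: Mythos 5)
Your proof is correct and follows essentially the same route as the paper: reduce to the basis $\{\mu_a\}$ and verify $E_k(\mu_a)(b)=\mu_a(\epsilon_k(b))$ via the equivalence $a\setminus\{k\}\subseteq b \Leftrightarrow a\subseteq b\cup\{k\}$, using Lemma~\ref{mu-function}. The only addition is your explicit remark that $f\mapsto f\circ\epsilon_k$ is linear, which the paper leaves implicit when it says ``it suffices to check'' the formula on the $\mu_a$.
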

\begin{proof}
It suffices to check that the formula
$E_k(\mu_a)=\mu_a\circ\epsilon_k$ holds for  each $a\in 2^{[m]}$.
Indeed, take $b\in 2^{[m]}$, we have that
$$E_k(\mu_a)(b)=1\Leftrightarrow a\setminus\{k\}\subseteq
b\Leftrightarrow a\subseteq b\cup\{k\}\Leftrightarrow
\mu_a(\epsilon_k(b))=1.$$ Therefore,
$E_k(\mu_a)=\mu_a\circ\epsilon_k$ as desired.
\end{proof}

\begin{prop} \label{p:E_k}
Fix $k\in[m]$. If $f\in\mathcal{F}_{[m]}$ satisfies $f(\{k\})=1$,
then $E_k(f)\in\mathcal{F}_{[m]}$ and
$\mathrm{supp}(E_k(f))\subseteq \mathrm{supp}(f)$.
\end{prop}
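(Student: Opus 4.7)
The proof should reduce to a direct computation via Lemma~\ref{E_k}, which gives the pointwise identity $E_k(f)(a) = f(a \cup \{k\})$ for every $a \in 2^{[m]}$. Combined with the fact that $K_f = \mathrm{supp}(f)$ is a simplicial complex (hence closed under taking subsets, since $f$ is nice), both conclusions should drop out quickly.

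First I would verify the containment $\mathrm{supp}(E_k(f)) \subseteq \mathrm{supp}(f)$. If $a \in \mathrm{supp}(E_k(f))$, then by Lemma~\ref{E_k} one has $f(a \cup \{k\}) = E_k(f)(a) = 1$, so $a \cup \{k\} \in K_f$. Since $K_f$ is a simplicial complex, every subset of $a \cup \{k\}$ also lies in $K_f$; in particular $a \in K_f = \mathrm{supp}(f)$.

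Next I would show $E_k(f) \in \mathcal{F}_{[m]}$, that is, that $\mathrm{supp}(E_k(f))$ is closed under taking subsets. Given $a \in \mathrm{supp}(E_k(f))$ and $b \subseteq a$, the inclusion $b \cup \{k\} \subseteq a \cup \{k\} \in K_f$ together with the simplicial complex property of $K_f$ forces $b \cup \{k\} \in K_f$, whence $E_k(f)(b) = f(b \cup \{k\}) = 1$ and $b \in \mathrm{supp}(E_k(f))$.

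There is no real obstacle; the argument just unwinds the formula of Lemma~\ref{E_k} and invokes the subset-closure of $K_f$ twice. The hypothesis $f(\{k\}) = 1$ is not strictly required for either stated conclusion, but it is present to align with the definition of $k$-th extendable and to guarantee that $E_k(f)$ is nontrivial: indeed, $E_k(f)(\emptyset) = f(\{k\}) = 1$, so $\emptyset \in \mathrm{supp}(E_k(f))$.
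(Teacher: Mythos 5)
Your proof is correct and follows essentially the same route as the paper's: both unwind the pointwise identity $E_k(f)(a)=f(a\cup\{k\})$ from Lemma~\ref{E_k} and then invoke the subset-closure property of $\mathrm{supp}(f)$ twice, once for each conclusion. The only cosmetic difference is the order (you do the containment first, the paper does niceness first). Your closing remark is also apt: the paper's proof explicitly uses $f(\{k\})=1$ to record $E_k(f)(\emptyset)=1$, but as you observe, the subset-closure argument already yields $\emptyset\in\mathrm{supp}(E_k(f))$ whenever that support is nonempty, so the hypothesis is really there to exclude the degenerate case $E_k(f)=0$ and to match the definition of $k$-th extendable used in the surrounding argument.
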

\begin{proof}
For any pair $a\subseteq b$ in $2^{[m]}$, we have that
$\epsilon_k(a)\subseteq\epsilon_k(b)$. So if $E_k(f)(b)=
f(\epsilon_k(b))=1$, then $f(\epsilon_k(a))=1$ since
$f\in\mathcal{F}_{[m]}$, and so $E_k(f)(a)=1$. Also, $f(\{k\})=1$
implies that $E_k(f)(\emptyset)=f(\emptyset\cup\{k\})=1$.  Thus,
$E_k(f)$ is nice.

\vskip .2cm

For any $a\in 2^{[m]}$, if $E_k(f)(a)=1$ then by Lemma~\ref{E_k}
$f(\epsilon_k(a))=1$, so $f(a)=1$ since $a\subseteq \epsilon_k(a)$
and $f\in\mathcal{F}_{[m]}$. Hence,  $\mathrm{supp}(E_k(f))\subseteq
\mathrm{supp}(f)$ as desired.
\end{proof}

Now let us look at the composition transformation $\Mob\circ
E_k\circ \Mob=:\hat{E}_k$. For any $a\in 2^{[m]}$, one has
\begin{equation}\label{inv}
\hat{E}_k(\delta_a)=\Mob\circ E_k\circ \Mob(\delta_a)=\Mob\circ E_k(\mu_a)
=\Mob(\mu_{a\setminus\{k\}})=\delta_{a\setminus\{k\}}.
\end{equation}
Note also that since $\Mob^2=\mathrm{id}$, $\Mob\circ
E_k=\hat{E}_k\circ\Mob$.

\begin{lem}\label{hat{E}_k}
For any $g\in 2^{[m]*}$ and $k\in [m]$, $\hat{E}_k(g)x_k=0$.
\end{lem}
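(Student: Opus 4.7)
The plan is to compute $\hat{E}_k(g)\cdot x_k$ pointwise on each $b\in 2^{[m]}$ and observe that it vanishes in two disjoint cases: when $k\notin b$, because the factor $x_k(b)$ is zero, and when $k\in b$, because $\hat{E}_k(g)(b)$ itself is zero. The second case is the substantive one.

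First I would expand $g$ in the $\delta$-basis of Lemma~\ref{basis} as $g=\sum_{a\in 2^{[m]}}g(a)\delta_a$. Then by linearity of $\hat{E}_k$ together with the identity $\hat{E}_k(\delta_a)=\delta_{a\setminus\{k\}}$ from equation~(\ref{inv}), I obtain
$$\hat{E}_k(g)=\sum_{a\in 2^{[m]}}g(a)\,\delta_{a\setminus\{k\}}.$$
This shows that $\mathrm{supp}(\hat{E}_k(g))$ is a union of singletons $\{a\setminus\{k\}\}$, each of which is a subset of $[m]$ that does not contain $k$.

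Next I would evaluate at an arbitrary $b\in 2^{[m]}$. If $k\notin b$, then $x_k(b)=0$ by definition of the coordinate function, and so $(\hat{E}_k(g)\cdot x_k)(b)=0$ immediately. If $k\in b$, then for every $a\in 2^{[m]}$ we have $a\setminus\{k\}\neq b$ (since $k\notin a\setminus\{k\}$), so $\delta_{a\setminus\{k\}}(b)=0$, whence $\hat{E}_k(g)(b)=0$ and again the product vanishes at $b$. Thus $\hat{E}_k(g)\cdot x_k$ vanishes pointwise, hence is zero in $2^{[m]*}$.

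There is essentially no obstacle here: the argument is a bookkeeping consequence of the explicit action of $\hat{E}_k$ on the $\delta$-basis established in~(\ref{inv}) together with the definition of $x_k$. The only thing to be careful about is keeping the two cases $k\in b$ and $k\notin b$ strictly separate, so that the vanishing is justified by the correct factor in the product on each side.
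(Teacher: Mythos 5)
Your proof is correct and follows essentially the same route as the paper: expand $g$ in the $\delta$-basis, use $\hat{E}_k(\delta_a)=\delta_{a\setminus\{k\}}$ from~(\ref{inv}), and conclude that each term $\delta_{a\setminus\{k\}}\cdot x_k$ vanishes. You merely spell out the final vanishing pointwise (splitting into $k\in b$ and $k\notin b$), where the paper simply asserts $\delta_{a\setminus\{k\}}x_k=0$ as obvious.
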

\begin{proof}
Write $g=\sum_{a\in \text{supp}(g)}\delta_a$. Since $\hat{E}_k$ is
linear and $\hat{E}_k(\delta_a)=\delta_{a\setminus\{k\}}$ for any
$a\in 2^{[m]}$, it follows that $\hat{E}_k(g)=\sum_{a\in
\text{supp}(g)}\delta_{a\setminus \{k\}}$. Obviously, for any $a\in
2^{[m]}$, $\delta_{a\setminus \{k\}}x_k=0$. Thus,
$\hat{E}_k(g)x_k=0$ as desired.
\end{proof}

\begin{cor}\label{c:Neq}
Let $k\in [m]$. If $f\in 2^{[m]*}$ satisfies
$\Mob(f)x_k\not=0$, then $f\neq E_k(f)$.
\end{cor}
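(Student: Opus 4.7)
The plan is to argue by contradiction, leveraging the intertwining relation $\Mob\circ E_k=\hat{E}_k\circ\Mob$ (which follows from $\Mob^2=\mathrm{id}$) together with Lemma~\ref{hat{E}_k}.

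Suppose, for contradiction, that $f=E_k(f)$. Applying $\Mob$ to both sides yields
\[
\Mob(f)=\Mob(E_k(f))=(\Mob\circ E_k)(f)=(\hat{E}_k\circ\Mob)(f)=\hat{E}_k(\Mob(f)).
\]
Now multiply this equality by the coordinate function $x_k$. By Lemma~\ref{hat{E}_k}, applied to $g=\Mob(f)$, we have $\hat{E}_k(\Mob(f))\cdot x_k=0$. Consequently $\Mob(f)\cdot x_k=0$, which contradicts the hypothesis $\Mob(f)\cdot x_k\neq 0$. Hence $f\neq E_k(f)$.

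There is no real obstacle here: the statement is essentially an immediate corollary of Lemma~\ref{hat{E}_k} once one observes that the composition $\Mob\circ E_k$ can be rewritten as $\hat{E}_k\circ\Mob$ using the involution property $\Mob^2=\mathrm{id}$. The only care needed is to apply $\Mob$ (rather than, say, $E_k$) to the assumed identity so that the conclusion of Lemma~\ref{hat{E}_k} becomes applicable.
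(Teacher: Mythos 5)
Your proof is correct and follows essentially the same route as the paper: assume $f=E_k(f)$, apply $\Mob$ and use $\Mob\circ E_k=\hat{E}_k\circ\Mob$, multiply by $x_k$, and invoke Lemma~\ref{hat{E}_k} to reach a contradiction. The only cosmetic difference is that the paper introduces the abbreviation $g=\Mob(f)$ before multiplying, which you do implicitly.
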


\begin{proof}
Suppose that $f=E_k(f)$. Applying $\Mob$ to both sides, we get
$\Mob(f)=\Mob (E_k(f))=\hat{E}_k(\Mob (f))$.  Write $g=\Mob(f)$. Then
$g=\hat{E}_k(g)$. Multiplied by $x_k$ on the two sides of
$g=\hat{E}_k(g)$, we have that $gx_k=\hat{E}_k(g)x_k$. Since
$gx_k=\Mob(f)x_k\not=0$,  we have $\hat{E}_k(g)x_k\neq0$, a
contradiction  by Lemma~\ref{hat{E}_k}.
\end{proof}

\begin{prop} \label{p:mob(f)}
Let $f\in 2^{[m]*}$. Then for each $k\in [m]$,
\begin{equation*}|\mathrm{supp}(\hat{E}_k(f))|\leq|\mathrm{supp}(f)|.\end{equation*}
\end{prop}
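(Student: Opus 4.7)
The plan is to compute $\hat{E}_k(f)$ explicitly in the $\{\delta_a\}$-basis and then count. By Lemma~\ref{basis} we expand $f = \sum_{a\in\mathrm{supp}(f)} \delta_a$, and by linearity together with the identity $\hat{E}_k(\delta_a) = \delta_{a\setminus\{k\}}$ established in (\ref{inv}), we obtain
$$\hat{E}_k(f) = \sum_{a\in\mathrm{supp}(f)} \delta_{a\setminus\{k\}} \quad\text{in } 2^{[m]*}.$$

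Next I would partition $2^{[m]}$ into the pairs $P_c = \{c,\ c\cup\{k\}\}$ indexed by those $c\in 2^{[m]}$ with $k\notin c$. The map $a\mapsto a\setminus\{k\}$ sends both elements of each pair $P_c$ to $c$. Hence, splitting the sum above according to these pairs, each pair $P_c$ contributes to $\hat{E}_k(f)$ either $0$ (if both elements of $P_c$ lie in $\mathrm{supp}(f)$, since we work modulo $2$), or $\delta_c$ (if exactly one element does), or nothing (if neither does).

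Let $B$ be the number of pairs $P_c$ entirely contained in $\mathrm{supp}(f)$ and let $O$ be the number of pairs meeting $\mathrm{supp}(f)$ in exactly one element. Then by the preceding paragraph $|\mathrm{supp}(\hat{E}_k(f))| = O$, while by counting the elements of $\mathrm{supp}(f)$ pair by pair, $|\mathrm{supp}(f)| = 2B + O$. Therefore
$$|\mathrm{supp}(f)| - |\mathrm{supp}(\hat{E}_k(f))| = 2B \geq 0,$$
which yields the desired inequality. There is no real obstacle here; the content of the argument is just that $\hat{E}_k$ identifies $a$ with $a\cup\{k\}$ and cancellations mod $2$ can only decrease the support.
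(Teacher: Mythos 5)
Your proof is correct and follows essentially the same approach as the paper: expand $f$ in the $\delta_a$-basis, apply $\hat E_k(\delta_a)=\delta_{a\setminus\{k\}}$, and count cancellations modulo $2$ among the pairs $\{c,\,c\cup\{k\}\}$. The paper phrases this as $|\mathrm{supp}(\hat E_k(f))|=|A\triangle B|\leq|A|+|B|=|\mathrm{supp}(f)|$ where $A$ and $B$ record, for each such pair, which member lies in $\mathrm{supp}(f)$; your $B$ and $O$ are exactly $|A\cap B|$ and $|A\triangle B|$, so the two arguments are the same up to notation.
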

\begin{proof}
Let $A=\big\{a\in 2^{[m]}\big| k\notin a, a\in\mathrm{supp}(f)\big\}$ and
$B=\big\{a\in 2^{[m]}\big| k\notin a,\epsilon_k(a)\in\mathrm{supp}(f)\big\}$.
Then we have
\begin{equation*}
\begin{split}
f=\sum_{a\in\mathrm{supp}(f)}\delta_a
 =\sum_{a\in\mathrm{supp}(f) \atop k\notin a} \delta_a+\sum_{a\in\mathrm{supp}(f) \atop k\in a} \delta_a
 =\sum_{a\in A} \delta_a+ \sum_{a\in B} \delta_{\epsilon_k(a)} \\
\end{split}
\end{equation*}
and by (\ref{inv})
\begin{equation*}
\begin{split}
 \hat{E}_k(f)=&\sum_{a\in A}\delta_{a\setminus\{k\}}+\sum_{a\in B}
 \delta_{\epsilon_k(a)\setminus\{k\}}
 =\sum_{a\in A}\delta_{a}+\sum_{a\in B}
 \delta_{a}=\sum_{a\in A\bigtriangleup B} \delta_{a}
\end{split}
\end{equation*}
where $ A\bigtriangleup B=(A\setminus B)\cup (B\setminus A)$.  Now
$$|\mathrm{supp}(\hat{E}_k(f))|=|A\bigtriangleup
B|\leq|A|+|B|=|\mathrm{supp}(f)|$$ as desired.
\end{proof}

\begin{rem}\label{sequence}
Observe that for any $f\in\mathcal{F}_{[m]}$, whenever $f$ is
$k$-th extendable for some $k\in [m]$, by Proposition~\ref{p:E_k} and
Corollary~\ref{c:Neq} we obtain that $E_k(f)\in\mathcal{F}_{[m]}$
and $\mathrm{supp}(E_k(f))\subsetneq\mathrm{supp}(f)$. In addition,
since $(\mathcal{M}\circ E_k)(f)=(\hat{E}_k\circ \mathcal{M})(f)$,
by Proposition~\ref{p:mob(f)} one has that
$|\mathrm{supp}(\mathcal{M}(E_k(f)))|\leq|\mathrm{supp}(\mathcal{M}(f))|$. We replace $f$ with
$E_k(f)$ and repeat the above process whenever possible, so as to
get a sequence of functions in $\mathcal{F}_{[m]}$ with strictly
decreasing support. This process must end after a finite number of
steps, giving finally a $f_0\in\mathcal{F}_{[m]}$ that is \emph{
non-extendable} with $\mathrm{supp}(f_0)\subseteq\mathrm{supp}(f)$
and $|\mathrm{supp}(\mathcal{M}(f_0))|\leq|\mathrm{supp}(\mathcal{M}(f))|$.
It remains to characterize such a non-extendable
$f_0\in\mathcal{F}_{[m]}$.
\end{rem}

\begin{prop}\label{p:non-ex} Let  $f\in\mathcal{F}_{[m]}$. Then $f$ is non-extendable if and only if there is some
$a_0\in 2^{[m]}$  such that $\mathrm{supp}(f)=2^{a_0}$ $($i.e.,
$f=\sum_{b\subseteq a_0}\delta_b)$.
\end{prop}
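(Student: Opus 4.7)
The plan is to take $a_0 := \{k \in [m] : f(\{k\}) = 1\}$, which is the vertex set of the simplicial complex $K_f = \mathrm{supp}(f)$. The \emph{if} direction will be a direct computation of $\Mob(f)$ when $\mathrm{supp}(f) = 2^{a_0}$, followed by inspection of the two extendability conditions; the \emph{only if} direction will use non-extendability to localize $\mathrm{supp}(\Mob(f))$ inside $2^{[m] \setminus a_0}$, and then invert using $\Mob^2 = \id$ from Lemma~\ref{mobius}.

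For the \emph{if} direction, suppose $\mathrm{supp}(f) = 2^{a_0}$, so $f = \sum_{b \subseteq a_0} \delta_b$. A short calculation with Lemma~\ref{mobius} and Lemma~\ref{mu-function} yields $\Mob(f)(c) = 2^{|a_0 \cap c|} \bmod 2$, which is $1$ precisely when $c \cap a_0 = \emptyset$. For $k \notin a_0$ this gives $f(\{k\}) = 0$, so (k-1) fails; for $k \in a_0$, every $c$ containing $k$ meets $a_0$, so $\Mob(f) \cdot x_k = 0$ and (k-2) fails. Hence $f$ is not $k$-th extendable for any $k$.

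For the \emph{only if} direction, assume $f$ is non-extendable. Niceness immediately gives $\mathrm{supp}(f) \subseteq 2^{a_0}$, since any simplex contains all its singletons and those must themselves be simplices. For each $k \in a_0$, condition (k-1) holds by definition of $a_0$, so non-extendability forces (k-2) to fail, i.e.\ $\Mob(f) \cdot x_k = 0$. Thus $\Mob(f)(c) = 0$ whenever $c \cap a_0 \neq \emptyset$, that is, $\mathrm{supp}(\Mob(f)) \subseteq 2^{[m] \setminus a_0}$. Applying $\Mob$ again and using $\Mob^2 = \id$, for any $a \subseteq a_0$ only the term $b = \emptyset$ survives in $f(a) = \sum_{b \subseteq a} \Mob(f)(b)$, yielding $f(a) = \Mob(f)(\emptyset) = f(\emptyset) = 1$. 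This gives $2^{a_0} \subseteq \mathrm{supp}(f)$, and combining with the reverse inclusion finishes the equality.

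The step I expect to carry the weight is the localization $\mathrm{supp}(\Mob(f)) \subseteq 2^{[m] \setminus a_0}$; once that is in hand, the one-term cancellation via $\Mob^2 = \id$ does the rest. The only subtlety to keep in mind is the degenerate case $f \equiv 0$, which does not fit the stated form $2^{a_0}$; this is implicitly avoided by working with nontrivial nice $f$ (where $f(\emptyset) = 1$), as is the case throughout the compression procedure of Remark~\ref{sequence}.
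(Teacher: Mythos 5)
Your proof is correct and mirrors the paper's argument: both take $a_0=\{k\in[m]:f(\{k\})=1\}$, use niceness to get $\mathrm{supp}(f)\subseteq 2^{a_0}$, deduce from failure of condition (k-2) that $\mathrm{supp}(\Mob(f))\subseteq 2^{[m]\setminus a_0}$, and then invert via $\Mob^2=\id$ to recover $f=1$ on $2^{a_0}$. Your direct computation of $\Mob(f)(c)=2^{|a_0\cap c|}\bmod 2$ in the converse direction is marginally cleaner than the paper's case split on $a_0=\emptyset,[m]$ with the factorization $f=\prod_{i\in[m]\setminus a_0}(\underline{1}+x_i)$, and you rightly flag the $f\equiv 0$ edge case that the paper's own proof passes over.
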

\begin{proof}
Suppose that $f$ is non-extendable.
 Let $a_0=\big\{k\in [m]\big|
f(\{k\})=1\big\}$. If $a_0=\emptyset$,  obviously we have
$f=\delta_\emptyset$. Assume that $a_0$ is non-empty.  Given an
element $b\in 2^{[m]}$, if $f(b)=1$, since $f\in\mathcal{F}_{[m]}$,
then for any $k\in b, f(\{k\})=1$ so $k\in a_0$ and $b\subseteq
a_0$. Since $f$ is non-extendable,
 $\Mob(f)x_k=0$
for any $k\in a_0$. Then we see from $\Mob(f)=\sum_{b\in
\text{supp}(\Mob(f))}\delta_b$ that for any $b\in
\text{supp}(\Mob(f))$, $b\cap a_0=\emptyset$. Since
$\Mob(f)(\emptyset)=1$, we have that
$\emptyset\in\text{supp}(\Mob(f))$. Furthermore
$$f(a_0)=\Mob^2(f)(a_0)=\Mob\Big(\sum_{b\in
\text{supp}(\Mob(f))}\delta_b\Big)(a_0)=\sum_{b\in
\text{supp}(\Mob(f))}\mu_b(a_0)=\mu_\emptyset(a_0)=1.$$ Since
$f\in\mathcal{F}_{[m]}$, it follows that for any subset $b\subseteq
a_0$, $f(b)=1$. Therefore, for $b\in 2^{[m]}$
$$f(b)=1\Leftrightarrow b\subseteq a_0.$$
This implies that $\text{supp}(f)=2^{a_0}=\{b\in 2^{[m]}\big|
b\subseteq a_0\}$. \vskip .2cm

Conversely, suppose that $f=\sum_{b\subseteq a_0}\delta_b$ for some
$a_0\in [m]$. If $a_0=[m]$, then $f=\underline{1}=\mu_\emptyset$ so
$\Mob(f)=\delta_\emptyset$. Moreover, for any $k\in a_0$,
$\Mob(f)x_k=0$ so $f$ is non-extendable. If $a_0=\emptyset$,
obviously $f$ is non-extendable. Assume that $a_0\not=[m],
\emptyset$. Then an easy argument shows that
$$f=\prod_{i\in [m]\setminus
a_0}(\underline{1}+x_i)=\sum_{b\subseteq [m]\setminus a_0} \mu_b.$$
Applying $\Mob$ to the above equality, it follows that
$\Mob(f)=\sum_{b\subseteq [m]\setminus a_0} \delta_b$. Now for any
$k\in a_0$ and any $b\subseteq [m]\setminus a_0$, we have
$\delta_bx_k=0$ so $\Mob(f)x_k=0$. This means that $f$ is also
non-extendable.
\end{proof}
From the proof of Proposition~\ref{p:non-ex}, we easily see that
\begin{cor}
Let $a\in 2^{[m]}$.  Then $f=\sum_{b\subseteq a}\delta_b$ if and
only if $\Mob(f)=\sum_{b\subseteq [m]\setminus a} \delta_b$ $($i.e.,
$\mathrm{supp}(f)=2^{a}$ if and only if
$\mathrm{supp}(\Mob(f))=2^{[m]\setminus a})$. In this case,
$|\mathrm{supp}(\Mob(f))|=2^{m-|a|}$.
\end{cor}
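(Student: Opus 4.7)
My plan is to derive this Corollary directly from the computation embedded in the proof of Proposition~\ref{p:non-ex}, combined with the involution property $\Mob^2=\mathrm{id}$ established in Lemma~\ref{mobius}. The forward direction is the substantive part. Assume $f=\sum_{b\subseteq a}\delta_b$. I would first dispose of the edge cases: if $a=[m]$, then $f=\underline{1}=\mu_\emptyset$, so Lemma~\ref{mobius} yields $\Mob(f)=\delta_\emptyset=\sum_{b\subseteq\emptyset}\delta_b$; if $a=\emptyset$, then $f=\delta_\emptyset$, so $\Mob(f)=\mu_\emptyset=\underline{1}=\sum_{b\subseteq[m]}\delta_b$. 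Both match $\sum_{b\subseteq[m]\setminus a}\delta_b$.

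For the generic case $\emptyset\neq a\neq[m]$, the key step is to establish the factorization
\begin{equation*}
f=\prod_{i\in[m]\setminus a}(\underline{1}+x_i).
\end{equation*}
This can be checked pointwise: evaluating at any $c\in 2^{[m]}$, each factor $(\underline{1}+x_i)(c)$ equals $1$ precisely when $i\notin c$, so the whole product equals $1$ iff $([m]\setminus a)\cap c=\emptyset$, i.e.\ iff $c\subseteq a$, which is exactly $f(c)$. Expanding the product and collecting terms yields $f=\sum_{b\subseteq[m]\setminus a}\mu_b$, whence applying $\Mob$ linearly and using $\Mob(\mu_b)=\delta_b$ from Lemma~\ref{mobius} gives $\Mob(f)=\sum_{b\subseteq[m]\setminus a}\delta_b$, as required.

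The converse direction is then automatic via the involution: if $\Mob(f)=\sum_{b\subseteq[m]\setminus a}\delta_b$, then applying the already-established forward direction to the subset $a':=[m]\setminus a$ gives $\Mob\bigl(\sum_{b\subseteq a'}\delta_b\bigr)=\sum_{b\subseteq a}\delta_b$; applying $\Mob$ once more to both sides and invoking $\Mob^2=\mathrm{id}$ recovers $f=\sum_{b\subseteq a}\delta_b$. Finally, the cardinality assertion reduces to $|\mathrm{supp}(\Mob(f))|=|2^{[m]\setminus a}|=2^{m-|a|}$. I do not anticipate any real obstacle here; the content has essentially been done inside Proposition~\ref{p:non-ex}, and the role of this Corollary is to repackage it as a clean equivalence with an immediate cardinality count, presumably to feed into the lower-bound argument for $|\mathrm{supp}(\Mob(f))|$ in Theorem~\ref{lower-bound}.
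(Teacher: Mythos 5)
Your proof is correct and follows the same route the paper takes: it extracts the factorization $f=\prod_{i\in[m]\setminus a}(\underline{1}+x_i)=\sum_{b\subseteq[m]\setminus a}\mu_b$ from the converse direction of the proof of Proposition~\ref{p:non-ex}, applies $\Mob(\mu_b)=\delta_b$ and linearity, and then obtains the converse of the corollary from $\Mob^2=\mathrm{id}$. The only (harmless) difference is that you verify the factorization pointwise and spell out the two degenerate cases $a=\emptyset,[m]$, where the paper simply says ``an easy argument shows.''
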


 We now summarize the above arguments as follows.

\begin{thm}\label{t:extend}
For any $f\in \mathcal{F}_{[m]}$, there exists some $a\in
\mathrm{supp}(f)$ such that
$$|\mathrm{supp}(\Mob(f))|\geq 2^{m-|a|}.$$
\end{thm}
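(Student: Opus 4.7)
The plan is to iterate the compression construction sketched in Remark~\ref{sequence} until it stalls, and then invoke the characterization of non-extendable functions in Proposition~\ref{p:non-ex} to read off the required $a$.

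Concretely, set $f_0 := f$ and, given $f_n \in \mathcal{F}_{[m]}$, check whether some $k \in [m]$ witnesses the extendability of $f_n$; if so, define $f_{n+1} := E_k(f_n)$; otherwise halt. Two monotonicity properties must travel along this sequence simultaneously. First, Proposition~\ref{p:E_k} shows $f_{n+1} \in \mathcal{F}_{[m]}$ with $\mathrm{supp}(f_{n+1}) \subseteq \mathrm{supp}(f_n)$, and Corollary~\ref{c:Neq} upgrades this to a strict inclusion, since $k$-th extendability forces $\Mob(f_n)\, x_k \neq 0$ and hence $f_n \neq E_k(f_n) = f_{n+1}$. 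Second, the identity $\Mob \circ E_k = \hat{E}_k \circ \Mob$ combined with Proposition~\ref{p:mob(f)} gives $|\mathrm{supp}(\Mob(f_{n+1}))| \leq |\mathrm{supp}(\Mob(f_n))|$. Because the supports $\mathrm{supp}(f_n)$ are finite subsets of $2^{[m]}$ that strictly decrease, the process must terminate after finitely many steps at some non-extendable $f_\ast \in \mathcal{F}_{[m]}$ satisfying
$$\mathrm{supp}(f_\ast) \subseteq \mathrm{supp}(f), \qquad |\mathrm{supp}(\Mob(f_\ast))| \leq |\mathrm{supp}(\Mob(f))|.$$

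At this stage Proposition~\ref{p:non-ex} supplies some $a \in 2^{[m]}$ with $\mathrm{supp}(f_\ast) = 2^a$, so in particular $a \in \mathrm{supp}(f_\ast) \subseteq \mathrm{supp}(f)$. The corollary immediately following Proposition~\ref{p:non-ex} then evaluates $|\mathrm{supp}(\Mob(f_\ast))| = 2^{m-|a|}$, and chaining with the previous display yields $|\mathrm{supp}(\Mob(f))| \geq 2^{m-|a|}$, which is exactly the required inequality.

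I do not expect any genuine obstacle here, since all the needed machinery has been assembled in the preceding subsection. The only point worth underlining is the need to track two quantities in parallel along the compression sequence: $|\mathrm{supp}(f_n)|$ must strictly decrease in order to guarantee termination, while $|\mathrm{supp}(\Mob(f_n))|$ must be non-increasing so that the exact value $2^{m-|a|}$ computed at the non-extendable endpoint propagates back to a lower bound for the original $f$.
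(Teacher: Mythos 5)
Your proposal is correct and follows essentially the same route as the paper, which spreads this argument across Remark~\ref{sequence}, Proposition~\ref{p:non-ex}, and the corollary following it, and then presents Theorem~\ref{t:extend} as a summary. You correctly isolate the two monotone quantities along the compression sequence (strictly shrinking $\mathrm{supp}(f_n)$ for termination, non-increasing $|\mathrm{supp}(\Mob(f_n))|$ via $\Mob\circ E_k = \hat{E}_k\circ\Mob$ and Proposition~\ref{p:mob(f)}), use Proposition~\ref{p:non-ex} and its corollary to evaluate the support of $\Mob$ at the non-extendable endpoint, and chain the bounds back to $f$.
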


\begin{rem}\label{empty}
The interested readers are invited to see a simple fact that
 $f\in\mathcal{F}_{[m]}$ can be compressed by compression-operators
into a non-extendable $f_0$ with $\mathrm{supp}(f_0)=2^{a_0}$  if and
only if $a_0$ is a maximal element in $\mathrm{supp}(f)$ as a poset.
This result will not be used later in this article.
\end{rem}

\section{Moment-angle complexes and their cohomologies}\label{s4}

Let $K$ be an abstract simplicial complex on vertex set $[m]$. Let
$(X, W)$ be a pair of topological spaces with $W\subset X$.
Following \cite[Construction 6.38]{bp}, for each simplex $\sigma$ in
$K$, set
$$B_\sigma(X, W)=\prod_{i=1}^m A_i$$
such that $$A_i=\begin{cases} X & \text{if } i\in \sigma\\
W & \text{if } i\in [m]\setminus\sigma.
\end{cases}$$ Then one can define the following subspace of the product
space $X^m$:
$$K(X,W)=\bigcup_{\sigma\in K}B_\sigma(X, W)\subset X^m.$$

\subsection{Moment-angle complexes}\label{s4.1} When the pair $(X, W)$ is chosen
as $(D^2, S^1)$,
$$\mathcal{Z}_K:=K(D^2, S^1)\subset (D^2)^m$$
is called the {\em moment-angle complex} on $K$
 where $D^2=\big\{z\in {\Bbb C}\big| |z|\leq 1\big\}$ is the unit disk in ${\Bbb
 C}$, and $S^1=\partial D^2$.
 Since $(D^2)^m\subset {\Bbb C}^m$ is invariant under the
standard action of $T^m$ on ${\Bbb C}^m$ given by $$\big((g_1,...,
g_m), (z_1, ..., z_m)\big)\longmapsto (g_1z_1, ..., g_mz_m),$$
 $(D^2)^m$ admits
a natural $T^m$-action whose orbit space is the unit cube
$I^m\subset {\Bbb R}^m_{\geq 0}$. The action $T^m\curvearrowright
(D^2)^m$ then induces a canonical $T^m$-action $\Phi$ on
$\mathcal{Z}_K$.

\vskip .2cm When the pair $(X, W)$ is chosen as $(D^1, S^0)$,
$${\Bbb R}\mathcal{Z}_K:=K(D^1, S^0)\subset (D^1)^m$$
is called the {\em real moment-angle complex} on $K$
 where $D^1=\big\{x\in {\Bbb R}\big| |x|\leq 1\big\}=[-1, 1]$ is the unit disk in ${\Bbb
 R}$, and $S^0=\partial D^1=\{\pm 1\}$. Similarly, $(D^1)^m\subset {\Bbb R}^m$ is invariant under the
standard action of $({\Bbb Z}_2)^m$ on ${\Bbb R}^m$ given by
$$\big((g_1,..., g_m), (x_1, ..., x_m)\big)\longmapsto (g_1x_1, ...,
g_mx_m).$$ Thus $(D^1)^m$ admits a natural $({\Bbb Z}_2)^m$-action
whose orbit space is also the unit cube $I^m\subset {\Bbb R}^m_{\geq
0}$, where ${\Bbb Z}_2=\{-1, 1\}$ is the group with respect to
multiplication. Furthermore, the action $({\Bbb
Z}_2)^m\curvearrowright (D^1)^m$ also induces a canonical $({\Bbb
Z}_2)^m$-action $\Phi_{\Bbb R}$ on ${\Bbb R}\mathcal{Z}_K$.

\vskip .2cm

Let $P_K$ be the cone on the barycentric subdivision of $K$. Since
the cone on the barycentric subdivision of a $k$-simplex is
combinatorially equivalent to the standard subdivision of a
$(k+1)$-cube, $P_K$ is naturally a cubical complex and it is
decomposed into cubes indexed by the simplices of $K$. Then one
knows from \cite{bp} and \cite{dj} that both $T^m$-action $\Phi$ on
$\mathcal{Z}_K$ and $({\Bbb Z}_2)^m$-action $\Phi_{\Bbb R}$ on
${\Bbb R}\mathcal{Z}_K$ have the same orbit space  $P_K$.

\begin{example}
When $K=2^{[m]}$, $\mathcal{Z}_K=(D^2)^m$ and ${\Bbb
R}\mathcal{Z}_K=(D^1)^m$. When $K=2^{[m]}\setminus\{[m]\}$,
$\mathcal{Z}_K=S^{2m-1}$ and ${\Bbb R}\mathcal{Z}_K=S^{m-1}$.
\end{example}

\begin{rem} In general, $\mathcal{Z}_K$ and ${\Bbb R}\mathcal{Z}_K$
are not manifolds. However,  if $K$ is a simplicial sphere, then
both $\mathcal{Z}_K$ and ${\Bbb R}\mathcal{Z}_K$ are  closed
manifolds (see \cite[Lemma 6.13]{bp}).
\end{rem}

\subsection{Cohomology}\label{cohomology}

V. M. Buchstaber and T. E. Panov in \cite[Theorem 7.6]{bp} have calculated the
cohomology of $\mathcal{Z}_K$ (see also~\cite[Theorem 4.7]{pa}). Their result is stated as follows.
\begin{thm}
[Buchstaber--Panov]\label{bp} As ${\bf k}$-algebras,
$$H^*(\mathcal{Z}_K; {\bf k})\cong \text{\rm Tor}^{{\bf k}[{\bf v}]}({\bf k}(K), {\bf
k})$$
where
${\bf k}(K)={\bf k}[{\bf v}]/I_K={\bf k}[v_1, ...,
v_m]/I_K$ with $\deg v_i=2$.
\end{thm}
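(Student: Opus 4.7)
The plan is to follow the standard Buchstaber--Panov strategy, which proceeds via the Borel construction, the Eilenberg--Moore spectral sequence, and a Koszul--resolution argument to force collapse.

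First, I would identify the homotopy quotient $ET^m\times_{T^m}\mathcal{Z}_K$ with the Davis--Januszkiewicz space $DJ(K):=\bigcup_{\sigma\in K}BT^\sigma\subset BT^m$, where $BT^\sigma$ is the product of $BS^1$--factors of $BT^m=(\mathbb{CP}^\infty)^m$ indexed by $\sigma$. Since on the piece $B_\sigma(D^2,S^1)$ every $D^2$--factor contributes $BS^1$ to the Borel quotient while every $S^1$--factor contributes a contractible $ES^1$, one gets $ET^m\times_{T^m}B_\sigma(D^2,S^1)\simeq BT^\sigma$, and assembling over $\sigma\in K$ yields $ET^m\times_{T^m}\mathcal{Z}_K\simeq DJ(K)$. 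A Mayer--Vietoris induction on the maximal simplices of $K$ then shows that the restriction
$$H^*(BT^m;{\bf k})={\bf k}[{\bf v}]\;\longrightarrow\;H^*(DJ(K);{\bf k})$$
is surjective with kernel exactly the Stanley--Reisner ideal $I_K$; hence $H^*(DJ(K);{\bf k})\cong{\bf k}(K)$ as ${\bf k}[{\bf v}]$--algebras.

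Second, I would apply the Eilenberg--Moore spectral sequence to the bundle
$$\mathcal{Z}_K\;\longrightarrow\;ET^m\times_{T^m}\mathcal{Z}_K\;\longrightarrow\;BT^m,$$
whose $E_2$--page is $\text{Tor}^{{\bf k}[{\bf v}]}({\bf k}(K),{\bf k})$ and which converges as a graded algebra to $H^*(\mathcal{Z}_K;{\bf k})$. This immediately produces the candidate isomorphism on the level of associated graded algebras.

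The main obstacle, and the technical heart of the theorem, is to show collapse at $E_2$ and to rule out multiplicative extension problems. My approach is to exploit the Koszul resolution of ${\bf k}$ over ${\bf k}[{\bf v}]$, namely $\Lambda[u_1,\ldots,u_m]\otimes{\bf k}[{\bf v}]$ with $\deg u_i=1$, $\deg v_i=2$, $du_i=v_i$. Tensoring over ${\bf k}[{\bf v}]$ with ${\bf k}(K)$ produces the bigraded differential graded algebra
$$R^*(K)\;:=\;\bigl(\Lambda[{\bf u}]\otimes{\bf k}(K),\,d\bigr),\qquad du_i=v_i,$$
whose cohomology, as a bigraded ${\bf k}$--algebra, realises $\text{Tor}^{{\bf k}[{\bf v}]}({\bf k}(K),{\bf k})$. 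I would then construct an explicit quasi-isomorphism of DGAs $R^*(K)\to C^*(\mathcal{Z}_K;{\bf k})$, using the natural $T^m$--equivariant cellular decomposition of $\mathcal{Z}_K$ coming from $D^2=\{1\}\cup(D^2\setminus\{1\})$ on each coordinate, so that the cells of $\mathcal{Z}_K$ are indexed by pairs $(\sigma,\tau)$ with $\tau\subset\sigma\in K$, and match exactly the monomials $v^\sigma u^\tau\in R^*(K)$. Verifying that the cellular differential agrees with $d$ under this matching is a direct check on each $B_\sigma(D^2,S^1)$. Once this DGA quasi-isomorphism is in hand, collapse of the Eilenberg--Moore spectral sequence is automatic and the identification of multiplicative structures is built in, completing the proof.
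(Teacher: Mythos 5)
The paper does not prove this theorem; it quotes it from Buchstaber--Panov~\cite[Theorem~7.6]{bp} (see also~\cite[Theorem~4.7]{pa}). What the paper \emph{does} prove directly is the module-level generalization for $\mathcal{Z}_K^{(\underline{{\Bbb D}},\underline{{\Bbb S}})}$ (Theorem~\ref{module-str}), by a route quite different from yours: it builds a small cochain subcomplex $\Omega^*$ of $D^*(\prod_i{\Bbb D}_i;{\bf k})$ spanned by cross products $x_1^{(k_1)}\times\cdots\times x_m^{(k_m)}$, passes to the quotient $\Omega^*(K)=\Omega^*/S_K$, proves by induction over maximal simplices (excision, relative K\"unneth, five-lemma) that $\Phi_K$ is a quasi-isomorphism onto $D^*(\mathcal{Z}_K^{(\underline{{\Bbb D}},\underline{{\Bbb S}})};{\bf k})$, decomposes $\Omega^*(K)$ by squarefree multidegree $a\in 2^{[m]}$, identifies $\Omega^*(K)_a$ with the (co)augmented cochain complex of the full subcomplex $K|_a$, and invokes Hochster's formula. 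This avoids the Borel construction, $DJ(K)$, and the Eilenberg--Moore spectral sequence entirely, at the cost of only recovering the graded ${\bf k}$-module structure --- which is all the paper needs, and exactly why it still cites the Buchstaber--Panov theorem separately. Your sketch follows the classical Buchstaber--Panov path (Borel construction, $H^*(DJ(K))\cong{\bf k}(K)$, EMSS, Koszul resolution), which is the right strategy for the \emph{ring} isomorphism.

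Two cautions on your sketch. First, the cell indexing is off: with the standard CW structure on $D^2$ (one cell in each of dimensions $0,1,2$), the cells of $\mathcal{Z}_K$ are indexed by pairs $(\sigma,\tau)$ with $\sigma\in K$ and $\tau\subseteq[m]\setminus\sigma$, i.e.\ $\sigma\cap\tau=\emptyset$, not $\tau\subset\sigma$; these match the squarefree representatives $u^\tau v^\sigma$ in the acyclic-quotient DGA $\Lambda[{\bf u}]\otimes{\bf k}(K)/(u_iv_i,v_i^2)$. Second, and more seriously, the claim that you will build ``an explicit quasi-isomorphism of DGAs $R^*(K)\to C^*(\mathcal{Z}_K;{\bf k})$'' so that ``the identification of multiplicative structures is built in'' skips over the actual crux: cellular cochains are not graded-commutative, so there is no map of commutative DGAs to the cellular cochain complex, and checking multiplicativity of the additive isomorphism is precisely where the original 2000 argument needed repair (cf.\ Baskakov and the corrected treatment in~\cite{pa}). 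The additive isomorphism comes cheaply from the cell decomposition; upgrading it to a ring isomorphism requires passing to the small quotient model and a genuine comparison of products, which your plan should make explicit rather than declare automatic.
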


Here we shall calculate the cohomologies of a class of generalized moment-angle complexes. For this, we begin with the notion of the generalized moment-angle complex, due to N. Strickland, cf.~\cite{bbcg} and~\cite{ds}.
Given an abstract simplicial complex $K$ on $[m]$, let $(\underline{X}, \underline{W})=\{(X_i, W_i)\}_{i=1}^m$ be $m$
pairs of CW-complexes with $W_i\subset X_i$. Then the {\em generalized moment-angle complex} is defined as follows:
$$K(\underline{X}, \underline{W})=\bigcup_{\sigma\in K}B_\sigma(\underline{X}, \underline{W})\subset
\prod_{i=1}^m X_i$$
where $B_\sigma(\underline{X}, \underline{W})=\prod_{i=1}^m H_i$ and $H_i=\begin{cases}
X_i &\text{ if } i\in \sigma\\
W_i & \text{ if } i\in [m]\setminus\sigma.
\end{cases}$

\vskip .2cm

Now take $(\underline{X}, \underline{W})=(\underline{\Bbb D}, \underline{\Bbb S})=\{({\Bbb D}_i, {\Bbb S}_i)\}_{i=1}^m$ with each CW-complex pair
$({\Bbb D}_i, {\Bbb S}_i)$ subject to the following conditions:
\begin{enumerate}
\item[(1)] ${\Bbb D}_i$ is acyclic, that is, $\widetilde{H}_j({\Bbb D}_i)=0$ for any $j$.
\item[(2)] There exists a unique $\kappa_i$ such that $\widetilde{H}_{\kappa_i}({\Bbb S}_i)=\Z$ and
 $\widetilde{H}_j({\Bbb S}_i)=0$ for any $j\neq \kappa_i$.
\end{enumerate}
Then our objective is to calculate the cohomology of
$$\mathcal{Z}_K^{(\underline{\Bbb D}, \underline{\Bbb S})}:=K(\underline{\Bbb D}, \underline{\Bbb S})=
\bigcup_{\sigma\in K}B_\sigma(\underline{\Bbb D}, \underline{\Bbb S})\subset \prod_{i=1}^m
{\Bbb D}_i.$$

First, for each $i\in [m]$, it follows immediately from  the long exact sequence of $({\Bbb D}_i, {\Bbb S}_i)$ that
$$\begin{CD}
0=\widetilde{H}^{\kappa_i}({\Bbb D}_i;{\bf k}) \longrightarrow \widetilde{H}^{\kappa_i}({\Bbb S}_i;{\bf k})\stackrel{\cong}{\longrightarrow}\widetilde{H}^{\kappa_i+1}(({\Bbb
D}_i, {\Bbb S}_i; {\bf k})\longrightarrow \widetilde{H}^{\kappa_i+1}({\Bbb
D}_i; {\bf k})=0.
 \end{CD}$$
On
the cellular cochain level, one has the following short exact sequence
$$\begin{CD}
0\longrightarrow D^*({\Bbb D}_i, {\Bbb S}_i;{\bf k}) \stackrel{j^*}{\longrightarrow} D^*({\Bbb D}_i;{\bf k})\stackrel{i^*}{\longrightarrow} D^*({\Bbb S}_i; {\bf k})\longrightarrow 0
 \end{CD}$$
 where each $D^k({\Bbb D}_i, {\Bbb S}_i;
{\bf k})$ can be considered as a subgroup of $D^k({\Bbb D}_i;
{\bf k})$, so $j^*$ is an inclusion.
By the zig-zag lemma, one can choose a  $\kappa_i$-cochain $x_i$ of $D^{\kappa_i}({\Bbb D}_i; {\bf k})$ such that
\begin{enumerate}
\item[$\bullet$] $i^*(x_i)$ represents a generator of $\widetilde{H}^{\kappa_i}({\Bbb S}_i;{\bf k})$.
\item[$\bullet$] $dx_i\in \ker i^*$ so $j^*(dx_i)=dx_i\in D^{\kappa_i+1}({\Bbb D}_i, {\Bbb S}_i;
{\bf k})\subseteq D^{\kappa_i+1}({\Bbb D}_i;
{\bf k})$ generates $\widetilde{H}^{\kappa_i+1}(({\Bbb
D}_i, {\Bbb S}_i; {\bf k})$, where $d$ is the coboundary operator of $D^*({\Bbb D}_i;{\bf k})$.
\end{enumerate}
Write $x_i^{(1)}=x_i$ and $x_i^{(2)}=dx_i$, and let $x_i^{(0)}$ denote the constant 0-cochain 1 in $D^0({\Bbb D}_i; {\bf k})$. Obviously, $x_i^{(0)}$, $x_i^{(1)}$ and $x_i^{(2)}$ are linearly independent in $D^*({\Bbb D}_i;{\bf k})$ as a
${\bf k}$-vector space.

\vskip .2cm Now let us work in the  cellular cochain complex $D^*(\prod_{i=1}^m{\Bbb D}_i; {\bf k})$ of the
 product space $\prod_{i=1}^m{\Bbb D}_i$. Let $\Omega^*$ be the vector subspace of  $D^*(\prod_{i=1}^m{\Bbb D}_i; {\bf k})$
 spanned by the following cross products
 $$x_1^{(k_1)}\times\cdots\times x_m^{(k_m)}, \ k_i\in \{0, 1, 2\}.$$
An easy observation shows that $\Omega^*$ is a cochain subcomplex of $D^*(\prod_{i=1}^m{\Bbb D}_i; {\bf k})$, and
$\big\{x_1^{(k_1)}\times\cdots\times x_m^{(k_m)}\big| \ k_i\in \{0, 1, 2\}\big\}$ forms a basis of $\Omega^*$ as a vector space  over ${\bf k}$
since $x_i^{(0)}$, $x_i^{(1)}$ and $x_i^{(2)}$ are linearly independent in $D^*({\Bbb D}_i;{\bf k})$.
For a convenience, we write each basis element $x_1^{(k_1)}\times\cdots\times x_m^{(k_m)}$ of $\Omega^*$ as the following form
$${\bf x}^{(\tau, \sigma)}$$
where ${\bf x}=x_1^{(k_1)}, ..., x_m^{(k_m)}$, $\tau=\{i \big|\ k_i=1\}$ and $\sigma=\{i\big|\ k_i=2\}$. In particular, if $\tau=\sigma=\emptyset$,
then  ${\bf x}^{(\emptyset, \emptyset)}=x_1^{(0)}\times\cdots \times x_m^{(0)}$. Thus, $\Omega^*$ can be
expressed as
$$\Omega^*=\mathrm{Span}\big\{{\bf x}^{(\tau, \sigma)}\big| \tau, \sigma\subseteq [m] \text{ with } \tau\cap \sigma=\emptyset\big\}.$$
Next by $\Phi_K$ we denote the composition
$$\Omega^*\hookrightarrow D^*(\prod_{i=1}^m{\Bbb D}_i; {\bf k})\stackrel{l^*}{\longrightarrow}
D^*(\mathcal{Z}_K^{(\underline{{\Bbb D}},\underline{{\Bbb S}})}; {\bf k})$$
where the latter map $l^*$ is induced by the inclusion $l: \mathcal{Z}_K^{(\underline{{\Bbb D}},\underline{{\Bbb S}})}\hookrightarrow \prod_{i=1}^m{\Bbb D}_i$, and it is surjective. Set
$$S_K=\mathrm{Span}\big\{{\bf x}^{(\tau, \sigma)}\in \Omega^*\big| \sigma\not\in K\big\}.$$
Clearly it is a cochain subcomplex of $\Omega^*$.

\begin{lem}
$S_K\subseteq \ker \Phi_K$. Furthermore, $\Phi_K$ can induce a cochain map $\Omega^*/S_K\longrightarrow
 D^*(\mathcal{Z}_K^{(\underline{{\Bbb D}},\underline{{\Bbb S}})}; {\bf k})$, also denoted by $\Phi_K$.
\end{lem}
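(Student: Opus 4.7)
The plan is to analyze the restriction of each basis cochain ${\bf x}^{(\tau,\sigma)}$ of $\Omega^*$ to the individual pieces $B_\rho(\underline{{\Bbb D}},\underline{{\Bbb S}})$ with $\rho\in K$, and show that the obstruction to nonvanishing is exactly the condition $\sigma\subseteq\rho$. The first thing I would record is the vanishing property of $x_i^{(2)}=dx_i$: by construction $dx_i\in\ker i^*$, i.e. $dx_i$ represents a class in $D^{\kappa_i+1}({\Bbb D}_i,{\Bbb S}_i;{\bf k})$, so as a cellular cochain on ${\Bbb D}_i$ it takes value zero on every cell of ${\Bbb S}_i\subseteq{\Bbb D}_i$. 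The cochains $x_i^{(0)}=1$ and $x_i^{(1)}=x_i$ have no such vanishing.

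Next I would compute the restriction of the external product ${\bf x}^{(\tau,\sigma)}=x_1^{(k_1)}\times\cdots\times x_m^{(k_m)}$ to $B_\rho(\underline{{\Bbb D}},\underline{{\Bbb S}})=\prod_i H_i$ where $H_i={\Bbb D}_i$ for $i\in\rho$ and $H_i={\Bbb S}_i$ for $i\notin\rho$. The restriction is the external product of the restrictions $x_i^{(k_i)}|_{H_i}$, and by the previous paragraph this product is identically zero as soon as there exists $i\in\sigma$ with $i\notin\rho$, since then the factor $dx_i|_{{\Bbb S}_i}=0$. Equivalently, ${\bf x}^{(\tau,\sigma)}|_{B_\rho}\neq 0$ forces $\sigma\subseteq\rho$.

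Now suppose $\sigma\notin K$. Since $K$ is a simplicial complex (closed under taking subsets), there is no $\rho\in K$ with $\sigma\subseteq\rho$; otherwise $\sigma$ itself would be a face of $K$. Hence ${\bf x}^{(\tau,\sigma)}|_{B_\rho}=0$ for every $\rho\in K$, and therefore ${\bf x}^{(\tau,\sigma)}$ restricts to zero on the union $\mathcal{Z}_K^{(\underline{{\Bbb D}},\underline{{\Bbb S}})}=\bigcup_{\rho\in K}B_\rho(\underline{{\Bbb D}},\underline{{\Bbb S}})$. Running through the generators of $S_K$ this shows $S_K\subseteq\ker\Phi_K$.

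For the second assertion, $\Phi_K$ is a composition of the inclusion $\Omega^*\hookrightarrow D^*(\prod_i{\Bbb D}_i;{\bf k})$ (a cochain map because $\Omega^*$ was already noted to be a cochain subcomplex) with the restriction $l^*$ induced by the cellular inclusion $l\colon \mathcal{Z}_K^{(\underline{{\Bbb D}},\underline{{\Bbb S}})}\hookrightarrow\prod_i{\Bbb D}_i$, which is automatically a cochain map. Combining this with $S_K\subseteq\ker\Phi_K$ and the fact that $S_K$ is a subcomplex of $\Omega^*$, $\Phi_K$ descends to a well-defined cochain map $\Omega^*/S_K\to D^*(\mathcal{Z}_K^{(\underline{{\Bbb D}},\underline{{\Bbb S}})};{\bf k})$. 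The only real content is the first assertion; the main (and only mild) obstacle is keeping straight that the vanishing of $dx_i$ on ${\Bbb S}_i$ comes from $dx_i\in j^*D^*({\Bbb D}_i,{\Bbb S}_i;{\bf k})$, and that the crucial combinatorial input is the closedness of $K$ under subsets.
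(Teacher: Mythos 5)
Your argument is correct and is essentially the paper's proof, just organized by pieces $B_\rho(\underline{{\Bbb D}},\underline{{\Bbb S}})$ rather than cell-by-cell: the paper fixes a product cell $e\subset B_{\sigma'}(\underline{{\Bbb D}},\underline{{\Bbb S}})$ of $\mathcal{Z}_K^{(\underline{{\Bbb D}},\underline{{\Bbb S}})}$, finds $i_0\in\sigma\setminus\sigma'$ using $\sigma\notin K$, and observes $\langle x_{i_0}^{(2)},e_{i_0}\rangle=0$ since $e_{i_0}\subset{\Bbb S}_{i_0}$ and $x_{i_0}^{(2)}\in D^{\kappa_{i_0}+1}({\Bbb D}_{i_0},{\Bbb S}_{i_0};{\bf k})$, which is exactly your vanishing observation. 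Both proofs hinge on the same combinatorial input (closedness of $K$ under subsets forces $\sigma\not\subseteq\rho$ for all $\rho\in K$) and the same relative-cochain vanishing of $dx_i$ on ${\Bbb S}_i$.
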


\begin{proof}
Let ${\bf x}^{(\tau, \sigma)}$ be a basis element in $S_K\subset D^*(\prod_{i=1}^m{\Bbb D}_i; {\bf k})$. For any product cell $e=e_1\times\cdots\times e_m\subset \mathcal{Z}_K^{(\underline{{\Bbb D}},\underline{{\Bbb S}})}\subseteq\prod_{i=1}^m{\Bbb D}_i$, there must be some $\sigma'\in K$ such that
$e\subset B_{\sigma'}(\underline{{\Bbb D}},\underline{{\Bbb S}})$, where each $e_i$ can represent a generator in the celluar chain group
$D_{\dim e_i}({\Bbb D}_i; {\bf k})$. In addition, it is easy to see that $e$ can also be regarded as a generator of
the cellular chain complex $D_*(\mathcal{Z}_K^{(\underline{{\Bbb D}},\underline{{\Bbb S}})}; {\bf k})
\stackrel{l_*}{\hookrightarrow} D_*(\prod_{i=1}^m{\Bbb D}_i; {\bf k})$ where $l_*$ is the inclusion
induced by $l: \mathcal{Z}_K^{(\underline{{\Bbb D}},\underline{{\Bbb S}})}\hookrightarrow\prod_{i=1}^m{\Bbb D}_i$. Since $\sigma\not\in K$, $\sigma$ is non-empty. Moreover, there is some
$i_0\in \sigma\setminus\sigma'$ such that $e_{i_0}\subset {\Bbb S}_{i_0}\subset{\Bbb D}_{i_0}$ and the factor $x_{i_0}^{(2)}\in D^{\kappa_{i_0}+1}({\Bbb D}_{i_0}, {\Bbb S}_{i_0};{\bf k})\subset D^{\kappa_{i_0}+1}({\Bbb D}_{i_0};{\bf k})$ in ${\bf x}^{(\tau, \sigma)}$, together yielding that
$\langle x_{i_0}^{(2)}, e_{i_0}\rangle=0$. 
Therefore, $\langle{\bf x}^{(\tau, \sigma)}, l_*(e)\rangle=\langle{\bf x}^{(\tau, \sigma)}, e\rangle=0$ by the definition of cross product.
Furthermore,  we  have that the value of $\Phi_K({\bf x}^{(\tau, \sigma)})$ on $e$ is
 $$\langle\Phi_K({\bf x}^{(\tau, \sigma)}), e\rangle=\langle{\bf x}^{(\tau, \sigma)}\circ l_*, \ e\rangle=
 \langle{\bf x}^{(\tau, \sigma)}, l_*(e)\rangle=0$$
 so $\Phi_K({\bf x}^{(\tau, \sigma)})=0$
 in $D^*(\mathcal{Z}_K^{(\underline{{\Bbb D}},\underline{{\Bbb S}})}; {\bf k})$, as desired.
\end{proof}

By $\Omega^*(K)$ we denote the quotient $\Omega^*/S_K$. Let $L$ be a subcomplex of $K$.  Then we obtain a pair
$(\mathcal{Z}_K^{(\underline{{\Bbb D}},\underline{{\Bbb S}})}, \mathcal{Z}_L^{(\underline{{\Bbb D}},\underline{{\Bbb S}})})$ of
CW-complexes. Now since $S_K\subseteq S_L$, we have a short exact sequence
\begin{equation}\label{short exact}
0 \longrightarrow\ker(\pi^*)\longrightarrow \Omega^*(K) \stackrel{\pi^*}{\longrightarrow}
\Omega^*(L)\longrightarrow 0
 \end{equation}
 where $\pi^*$ is induced by the natural inclusion $\pi: S_K\hookrightarrow S_L$. By $\Omega^*(K, L)$ we denote the kernel $\ker\pi^*$. It is easy to see that two cochain maps $\Phi_K: \Omega^*(K)\longrightarrow D^*(\mathcal{Z}_K^{(\underline{{\Bbb D}},\underline{{\Bbb S}})}; {\bf k})$ and $\Phi_L: \Omega^*(L)\longrightarrow D^*(\mathcal{Z}_L^{(\underline{{\Bbb D}},\underline{{\Bbb S}})}; {\bf k})$
 give a cochain map $\Phi_{(K,L)}: \Omega^*(K,L)\longrightarrow D^*(\mathcal{Z}_K^{(\underline{{\Bbb D}},\underline{{\Bbb S}})}, \mathcal{Z}_L^{(\underline{{\Bbb D}},\underline{{\Bbb S}})}; {\bf k})$ such that the following diagram commutes
$$ \xymatrix{
 0 \ar[r]&  \Omega^*(K,L) \ar[d]_{\Phi_{(K,L)}} \ar[r] & \Omega^*(K) \ar[d]_{\Phi_K} \ar[r]^{\pi^*}
                & \Omega^*(L) \ar[d]^{\Phi_L} \ar[r] & 0 \\
 0 \ar[r] & D^*(\mathcal{Z}_K^{(\underline{{\Bbb D}},\underline{{\Bbb S}})}, \mathcal{Z}_L^{(\underline{{\Bbb D}},\underline{{\Bbb S}})}; {\bf k})  \ar[r]
                & D^*(\mathcal{Z}_K^{(\underline{{\Bbb D}},\underline{{\Bbb S}})}; {\bf k})  \ar[r] &
             D^*(\mathcal{Z}_L^{(\underline{{\Bbb D}},\underline{{\Bbb S}})}; {\bf k})    \ar[r] & 0.      }$$
Furthermore, we may obtain a homomorphism between two long exact cohomology sequences given by two short exact sequences above.

\begin{prop}\label{module-iso}
For any $K\in \mathcal{K}_{[m]}$, $\Phi_K$ induces an isomorphism
$$H^*(\Omega^*(K); {\bf k})\stackrel{\cong}{\longrightarrow}
H^*(\mathcal{Z}_K^{(\underline{{\Bbb D}}, \underline{{\Bbb S}})}; {\bf k})$$
as graded ${\bf k}$-modules.
\end{prop}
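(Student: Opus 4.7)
The plan is induction on $|K|$, the number of simplices of $K$. The base case $K=\emptyset$ is trivial, since $\Omega^*(K)=0$ and $\mathcal{Z}_K^{(\underline{\Bbb D},\underline{\Bbb S})}=\emptyset$. For the inductive step with $|K|\geq 1$, I choose a maximal simplex $\sigma\in K$ and set $L=K\setminus\{\sigma\}$, which is still a simplicial subcomplex with $|L|=|K|-1$. Applying the five lemma to the commuting ladder already set up just before the proposition---the top row being the long exact cohomology sequence induced by (\ref{short exact}), and the bottom row being the long exact sequence of the CW pair $(\mathcal{Z}_K^{(\underline{\Bbb D},\underline{\Bbb S})},\mathcal{Z}_L^{(\underline{\Bbb D},\underline{\Bbb S})})$---and using the inductive hypothesis that $\Phi_L$ is a quasi-isomorphism, it suffices to show that the relative map $\Phi_{(K,L)}$ is also a quasi-isomorphism.

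The crux, which I expect to be the main obstacle, is this claim about $\Phi_{(K,L)}$. Algebraically, $\Omega^*(K,L)=\ker\pi^*$ has basis $\{{\bf x}^{(\tau,\sigma)}\mid\tau\subseteq[m]\setminus\sigma\}$, and the induced differential vanishes on it: the Leibniz-type computation $d{\bf x}^{(\tau,\sigma)}=\sum_{j\in\tau}\pm{\bf x}^{(\tau\setminus\{j\},\,\sigma\cup\{j\})}$ combined with the maximality of $\sigma$ (forcing $\sigma\cup\{j\}\notin K$) puts every summand into $S_K$, where it vanishes in $\Omega^*(K)$. Hence $H^*(\Omega^*(K,L);{\bf k})=\Omega^*(K,L)$ as a graded vector space, with Poincar\'e series $(1+t^{\kappa_i})$-factors for $i\notin\sigma$ times $t^{\kappa_i+1}$-factors for $i\in\sigma$.

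On the topological side, excision identifies $H^*(\mathcal{Z}_K^{(\underline{\Bbb D},\underline{\Bbb S})},\mathcal{Z}_L^{(\underline{\Bbb D},\underline{\Bbb S})};{\bf k})$ with $H^*(B_\sigma(\underline{\Bbb D},\underline{\Bbb S}),\,\partial_\sigma B_\sigma;{\bf k})$, where $\partial_\sigma B_\sigma$ denotes the subspace of $B_\sigma$ consisting of tuples whose coordinate indexed by some $i\in\sigma$ lies in ${\Bbb S}_i$; this uses that the faces of $\sigma$ remain in $L$, so $\mathcal{Z}_L^{(\underline{\Bbb D},\underline{\Bbb S})}\cap B_\sigma=\partial_\sigma B_\sigma$. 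The relative K\"unneth formula then decomposes the target as $\bigotimes_{i\in\sigma}H^*({\Bbb D}_i,{\Bbb S}_i;{\bf k})\otimes\bigotimes_{i\notin\sigma}H^*({\Bbb S}_i;{\bf k})$, and under the chosen cocycles $x_i^{(2)},x_i^{(1)},x_i^{(0)}$ the resulting cross-product basis matches, via $\Phi_{(K,L)}$, exactly the basis $\{{\bf x}^{(\tau,\sigma)}\mid\tau\subseteq[m]\setminus\sigma\}$ of $\Omega^*(K,L)$. Therefore $\Phi_{(K,L)}$ induces an isomorphism on cohomology, and the five lemma closes the induction.
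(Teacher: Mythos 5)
Your proposal is correct and follows essentially the same argument as the paper: induction on $|K|$ by removing a maximal simplex $\sigma_0$, identifying $\Omega^*(K,L)$ as a cochain complex with zero differential, computing $H^*(\mathcal{Z}_K^{(\underline{\Bbb D},\underline{\Bbb S})},\mathcal{Z}_L^{(\underline{\Bbb D},\underline{\Bbb S})})$ via excision (the paper passes through $(\mathcal{Z}_{2^{\sigma_0}},\mathcal{Z}_{2^{\sigma_0}\setminus\sigma_0})$, which is the same pair $(B_{\sigma_0},\partial_{\sigma_0}B_{\sigma_0})$ you use) together with the relative K\"unneth formula, and closing with the five-lemma. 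The only cosmetic difference is your choice of base case $K=\emptyset$ versus the paper's $K=\{\emptyset\}$, both of which are immediate.
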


\begin{proof}
First observe that for $K=\{\emptyset\}$, $\Omega^*(K)$ is spanned by
$\{{\bf x}^{(\tau, \emptyset)}\big| \tau\subseteq [m]\}$ with zero coboundary operator. On the other hand, if $K=\{\emptyset\}$ then  $\mathcal{Z}_K^{(\underline{{\Bbb D}}, \underline{{\Bbb S}})}=\prod_{i=1}^m{\Bbb S}_i$. By the K\"unneth formula, the above set is not but a basis of  $H^*(\mathcal{Z}_K^{(\underline{{\Bbb D}}, \underline{{\Bbb S}})}; {\bf k})$ as a graded ${\bf k}$-module (if we view the elements of the set as cohomological classes).
Thus, clearly $\Phi_K$ induces an isomorphism in this case.

\vskip .2cm
Next we proceed inductively by considering a pair of abstract simplicial complexes $(K, L)$ where $K=L\sqcup \{\sigma_0\}$ for some simplex $\sigma_0$ (which is a maximal element of $K$ as a poset). Hence $(\mathcal{Z}_K^{(\underline{{\Bbb D}}, \underline{{\Bbb S}})}, \mathcal{Z}_L^{(\underline{{\Bbb D}}, \underline{{\Bbb S}})})$ is a pair of CW-complexes, which has by excision the same cohomology
as $(\mathcal{Z}_{2^{\sigma_0}}^{(\underline{{\Bbb D}}, \underline{{\Bbb S}})}, \mathcal{Z}_{2^{\sigma_0}\setminus \sigma_0}^{(\underline{{\Bbb D}}, \underline{{\Bbb S}})})$. This pair $(\mathcal{Z}_{2^{\sigma_0}}^{(\underline{{\Bbb D}}, \underline{{\Bbb S}})}, \mathcal{Z}_{2^{\sigma_0}\setminus \sigma_0}^{(\underline{{\Bbb D}}, \underline{{\Bbb S}})})$ is in turn homeomorphic to
$$\prod_{i\in [m]\setminus\sigma_0}{\Bbb S}_i\times\Big(\prod_{i\in \sigma_0}{\Bbb D}_i, A(\prod_{i\in \sigma_0}{\Bbb D}_i)\Big)$$
where $A(\prod_{i\in \sigma_0}{\Bbb D}_i)=({\Bbb S}_{i_1}\times{\Bbb D}_{i_2}\times\cdots\times {\Bbb D}_{i_s})\cup\cdots \cup
({\Bbb D}_{i_1}\times\cdots\times{\Bbb D}_{i_{s-1}}\times{\Bbb S}_{i_s})$ with $\sigma_0=\{i_1, ..., i_s\big| i_1<\cdots<i_s\}$.
By relative K\"unneth formula,  its cohomology with ${\bf k}$ coefficients is isomorphic to $$\mathrm{Span}\{{\bf x}^{(\tau, \sigma_0)}\big| \tau\subseteq [m]
 \text{ with } \tau\cap \sigma_0=\emptyset\}$$
as graded ${\bf k}$-modules.
On the other hand, we see easily from the  short exact sequence (\ref{short exact})
 that $\Omega^*(K, L)=\ker\pi^*$ is exactly equal to the cochain complex $$\mathrm{Span}\{{\bf x}^{(\tau, \sigma_0)}\big| \tau\subseteq [m]
 \text{ with } \tau\cap \sigma_0=\emptyset\}$$ with zero coboundary operator. It then follows that
 $\Phi_{(K,L)}$ induces an isomorphism $H^*(\Omega^*(K,L);{\bf k})\stackrel{\cong}{\longrightarrow}
 H^*(\mathcal{Z}_K^{(\underline{{\Bbb D}},\underline{{\Bbb S}})}, \mathcal{Z}_L^{(\underline{{\Bbb D}},\underline{{\Bbb S}})}; {\bf k})$ as graded ${\bf k}$-modules.
   Inductively, now we may assume that $\Phi_L$ induces an isomorphism $H^*(\Omega^*(L);{\bf k})\longrightarrow
H^*(\mathcal{Z}_L^{(\underline{{\Bbb D}}, \underline{{\Bbb S}})};{\bf k})$ as graded ${\bf k}$-modules. Hence we may conclude that the same holds for $H^*(\Omega^*(K);{\bf k})\longrightarrow
H^*(\mathcal{Z}_K^{(\underline{{\Bbb D}}, \underline{{\Bbb S}})};{\bf k})$ by the five-lemma. This completes the induction and the proof of Proposition~\ref{module-iso}.
\end{proof}

Now let us return back to study the complex $(\Omega^*(K), \underline{d})$.
    First, we may impose a $\{0,1\}^m$-graded (or $2^{[m]}$-graded) structure on $\Omega^*(K)$, by defining
    for $a\in 2^{[m]}$
\begin{equation*}
\Omega^*(K)_a:
=\text{Span}\big\{{\bf x}^{(\tau, \sigma)}\big| \tau\subseteq [m], \sigma\in K \text{ with }
\tau\cup \sigma=a, \tau\cap\sigma=\emptyset\big\}.
\end{equation*}
 Then, clearly $\Omega^*(K)=\bigoplus_{a\in [m]}\Omega^*(K)_a$. Furthermore, given a basis element ${\bf x}^{(\tau, \sigma)}
\in \Omega^*(K)_a$ with $\tau=a\setminus\sigma$, by a direct calculation we have that
\begin{equation*}\label{cob-op}
\underline{d}({\bf x}^{(a\setminus\sigma, \sigma)})=\sum_{k\in a\setminus\sigma \atop \sigma\cup\{k\}\in K}\epsilon_k{\bf x}^{(a\setminus(\sigma\cup\{k\}),\sigma\cup\{k\})}
\end{equation*}
which still belongs to  $\Omega^*(K)_a$, where $\epsilon_k=\pm1$. So $(\Omega^*(K)_a, \underline{d})$ has also a cochain complex structure.
This means that $\Omega^*(K)$ is a bigraded ${\bf k}$-module. Also, clearly the basis of  $\Omega^*(K)_a$ is indexed by $K|_a$
 where $K|_a=\{\sigma\in K\big| \sigma\subseteq a\}$.

\begin{lem}\label{hochster}
For each $a\in 2^{[m]}$, $(\Omega^*(K)_a, \underline{d})$ is isomorphic to
the coaugmented cochain complex
$(C^*(K|_a; {\bf k}), d')$ as cochain complexes.
Furthermore, $H^*(\Omega^*(K)_a; {\bf k})$ $\cong \widetilde{H}^*(K|_a; {\bf k})$ as graded ${\bf k}$-modules.
\end{lem}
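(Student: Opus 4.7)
The plan is to match $(\Omega^*(K)_a,\underline{d})$ with the coaugmented simplicial cochain complex of $K|_a$ by reading each basis element ${\bf x}^{(a\setminus\sigma,\sigma)}$ as the dual cochain of the simplex $\sigma$. A basis element of $\Omega^*(K)_a$ has the shape ${\bf x}^{(\tau,\sigma)}$ with $\tau\cup\sigma=a$, $\tau\cap\sigma=\emptyset$, and $\sigma\in K$; since $\sigma\subseteq a$, this forces $\sigma\in K|_a$, and the datum is determined by $\sigma$ alone via $\tau=a\setminus\sigma$. Consequently the basis of $\Omega^*(K)_a$ is in natural bijection with $K|_a$ (including the empty simplex), and I would define a ${\bf k}$-linear map $\psi:\Omega^*(K)_a\To C^*(K|_a;{\bf k})$ by ${\bf x}^{(a\setminus\sigma,\sigma)}\mapsto\sigma^*$, where $\sigma^*\in C^{|\sigma|-1}(K|_a;{\bf k})$ is the cochain dual to $\sigma$, with the convention that $\emptyset^*$ sits in degree $-1$ and realizes the coaugmentation.

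Next I would verify that $\psi$ intertwines $\underline{d}$ with the simplicial coboundary $d'$. The formula recalled just before the lemma,
$$\underline{d}({\bf x}^{(a\setminus\sigma,\sigma)})=\sum_{\substack{k\in a\setminus\sigma\\ \sigma\cup\{k\}\in K}}\epsilon_k\,{\bf x}^{(a\setminus(\sigma\cup\{k\}),\sigma\cup\{k\})},$$
moves a single vertex $k$ from $\tau$ into $\sigma$ whenever $\sigma\cup\{k\}$ remains a simplex of $K$; this has exactly the support of $d'(\sigma^*)=\sum_{k\notin\sigma,\,\sigma\cup\{k\}\in K|_a}\pm(\sigma\cup\{k\})^*$. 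Fixing the standard linear order on $[m]$, the signs $\epsilon_k$ inherited from the cross-product convention on $D^*(\prod_i{\Bbb D}_i;{\bf k})$ can be brought into agreement with the simplicial coboundary signs after rescaling each $\sigma^*$ by a fixed sign depending only on $\sigma$; such a rescaling is an automorphism of the underlying free graded ${\bf k}$-module and upgrades $\psi$ to a genuine isomorphism of cochain complexes, up to the uniform degree shift $\sum_{i\in a}\kappa_i+1$ dictated by the intrinsic degrees of the factors $x_i^{(k_i)}$.

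Finally, taking cohomology and using the standard identification of the cohomology of the coaugmented simplicial cochain complex of a simplicial complex with its reduced cohomology, one obtains $H^*(\Omega^*(K)_a;{\bf k})\cong\widetilde{H}^*(K|_a;{\bf k})$ as graded ${\bf k}$-modules. The main obstacle is purely bookkeeping: verifying that the signs $\epsilon_k$ produced by the graded Leibniz rule on $D^*(\prod_i{\Bbb D}_i;{\bf k})$ can indeed be absorbed into a consistent sign-renormalization of the basis so as to reproduce the simplicial coboundary signs. Beyond that, everything reduces to a combinatorial identification of bases and an invocation of the classical fact that the coaugmentation shifts absolute cohomology to reduced cohomology.
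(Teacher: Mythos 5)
Your proposal identifies exactly the right map $\psi:{\bf x}^{(a\setminus\sigma,\sigma)}\mapsto\sigma^*$ and correctly diagnoses the one nontrivial point: showing that the Koszul signs $\epsilon_k$ can be absorbed by rescaling each basis vector by a sign $\varepsilon_\sigma=\pm1$ so as to reproduce the simplicial coboundary. But you then assert this can be done without proving it, calling it ``purely bookkeeping.'' It is not merely bookkeeping; it is the actual content of the lemma, and your proof as written has a genuine gap precisely there. It is not automatic that an arbitrary differential of the form $v_\sigma\mapsto\sum_{k\in\sigma}\epsilon_k\,v_{\sigma\setminus\{k\}}$ with $\epsilon_k=\pm1$ can be rescaled to the standard simplicial boundary; what saves you is the constraint $\underline{d}^2=0$, and you never invoke it.

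The paper closes this gap with a standalone abstract lemma (about a vector space $V(K)$ with a $K$-indexed basis and a square-zero operator $\iota$ supported on the boundary pattern), proved by induction on the number of simplices: remove a maximal simplex $\sigma_0$, apply the inductive hypothesis to $L=K\setminus\{\sigma_0\}$ to get a sign-corrected isomorphism $f_0$, and then observe that $f_0(\iota(v_{\sigma_0}))$ is a cycle lying in $C_{|\sigma_0|-2}(2^{\sigma_0};{\bf k})$; acyclicity of $C_*(2^{\sigma_0};{\bf k})$ together with $\partial(n\sigma_0)=f_0(\iota(v_{\sigma_0}))$ forces $n=\pm1$, which is the sign $\varepsilon_{\sigma_0}$ needed to extend $f_0$ compatibly. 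This is exactly the argument you need to insert: it converts ``there should be a rescaling'' into ``there is a rescaling, because $\iota^2=0$ and the full simplex is acyclic.'' Your degree-count (shift by $\sum_{i\in a}\kappa_i+1$, $\emptyset^*$ in degree $-1$ as the coaugmentation) and the resulting identification with $\widetilde{H}^*(K|_a;{\bf k})$ are correct and match the paper. So: same route as the paper, but the sign-renormalization claim must be proved, not asserted.
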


Lemma~\ref{hochster} is a (dualized) consequence of the following general result.

\begin{lem}
Let $K$ be an abstract simplicial complex on a finite set.  Let $V(K)$ be a vector
space over ${\bf k}$ with a $K$-indexed basis $\{v_\sigma| \sigma\in K\}$, and let $\iota: V(K)\longrightarrow V(K)$ be a linear map such that $\iota^2= 0$ and $\iota(v_\sigma)=\sum_{k\in \sigma}\varepsilon_k v_{\sigma\setminus\{k\}}$
 where $\varepsilon_k=\pm1$. Then there is an isomorphism $f: V(K)\longrightarrow  C_*(K;{\bf k})$ as ${\bf k}$-vector spaces with form $f: v_\sigma \longmapsto \varepsilon_\sigma\sigma$ such that $f\circ\iota= \partial\circ f$, where $\varepsilon_\sigma=\pm1$ and $C_*(K;{\bf k})$ is the ordinary chain complex over ${\bf k}$ of $K$ with the boundary operator $\partial$.
\end{lem}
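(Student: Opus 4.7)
The plan is to produce the signs $\varepsilon_\sigma\in\{\pm 1\}$ by induction on $|\sigma|$, using $\iota^2=0$ together with the familiar identity $\partial^2=0$ as the compatibility input. First I would fix a total order on the underlying finite vertex set; this equips $C_*(K;{\bf k})$ with the standard incidence numbers $[\sigma:\tau]\in\{\pm 1\}$ so that $\partial\sigma=\sum_\tau [\sigma:\tau]\,\tau$ summed over codimension-one faces $\tau\subset\sigma$. Write $\iota_{\sigma,\tau}\in\{\pm 1\}$ for the coefficient of $v_\tau$ in $\iota(v_\sigma)$ (zero unless $\tau$ is a codimension-one face of $\sigma$), and set $c(\sigma,\tau):=\iota_{\sigma,\tau}\,[\sigma:\tau]\in\{\pm 1\}$. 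Unpacking $f\circ\iota=\partial\circ f$ on each basis element reduces it to the system
$$\varepsilon_\sigma=c(\sigma,\tau)\,\varepsilon_\tau\qquad\text{for every codimension-one pair }\tau\subsetneq\sigma\text{ in }K.$$

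I would put $\varepsilon_\emptyset:=1$, and, having defined $\varepsilon_\tau$ for all $|\tau|<n$, define $\varepsilon_\sigma$ for $|\sigma|=n$ by choosing any $k\in\sigma$ and setting $\varepsilon_\sigma:=c(\sigma,\sigma\setminus\{k\})\,\varepsilon_{\sigma\setminus\{k\}}$. The substance of the proof is that this is independent of the choice of $k$. Given another $l\in\sigma$, write $\rho=\sigma\setminus\{k,l\}$, $\tau_1=\sigma\setminus\{k\}$, $\tau_2=\sigma\setminus\{l\}$; by the inductive hypothesis (applied to $\tau_i$ by deleting the remaining element of $\{k,l\}$) one has $\varepsilon_{\tau_i}=c(\tau_i,\rho)\,\varepsilon_\rho$, so well-definedness reduces to the diamond identity
$$c(\sigma,\tau_1)\,c(\tau_1,\rho)=c(\sigma,\tau_2)\,c(\tau_2,\rho).$$

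Here both hypotheses enter and interlock. Reading off the coefficient of $\rho$ in $\partial^2\sigma=0$ yields $[\sigma:\tau_1][\tau_1:\rho]=-[\sigma:\tau_2][\tau_2:\rho]$, while the coefficient of $v_\rho$ in $\iota^2(v_\sigma)=0$ yields $\iota_{\sigma,\tau_1}\iota_{\tau_1,\rho}=-\iota_{\sigma,\tau_2}\iota_{\tau_2,\rho}$. Multiplying these two equalities (the two minus signs cancel) and recalling $c(\sigma,\tau_i)\,c(\tau_i,\rho)=\iota_{\sigma,\tau_i}\iota_{\tau_i,\rho}\cdot[\sigma:\tau_i][\tau_i:\rho]$ produces the diamond identity. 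With $\varepsilon$ so constructed, $f(v_\sigma):=\varepsilon_\sigma\sigma$ is automatically a ${\bf k}$-linear isomorphism on bases that intertwines $\iota$ with $\partial$ by construction. The main obstacle is precisely the diamond consistency just described, which is the single point where both vanishing hypotheses are needed; everything else in the argument is formal bookkeeping.
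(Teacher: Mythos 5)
Your proof is correct, and its inductive structure is genuinely different from the paper's. The paper inducts on the complex $K$ itself, stripping off a maximal simplex $\sigma_0$: having defined $f_0$ on $L=K\setminus\{\sigma_0\}$, they observe that $f_0(\iota(v_{\sigma_0}))$ is a cycle supported in the acyclic complex $C_*(2^{\sigma_0};{\bf k})$, hence a boundary $\partial(n\sigma_0)$ of the one-dimensional top chain group, and a comparison of $\pm 1$ coefficients forces $n=\pm 1$; this maximal-simplex induction is chosen to run parallel to the excision/five-lemma argument of Proposition~\ref{module-iso} that the lemma serves. You instead induct on $|\sigma|$ and construct the sign function directly, trading the global acyclicity input for the local diamond identity $c(\sigma,\tau_1)c(\tau_1,\rho)=c(\sigma,\tau_2)c(\tau_2,\rho)$, extracted by reading the coefficient of $\rho$ in $\partial^2\sigma=0$ and of $v_\rho$ in $\iota^2(v_\sigma)=0$; this is more elementary and entirely self-contained, invoking nothing beyond the two squared-to-zero hypotheses. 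One point worth making explicit in the write-up: the inductive hypothesis should be the well-definedness statement itself --- for every $\tau$ with $|\tau|<n$, the product $c(\tau,\tau')\varepsilon_{\tau'}$ is the same for every codimension-one face $\tau'$ of $\tau$ --- since your step for $\tau_i$ appeals to the particular face $\rho$, which need not be the face originally chosen when $\varepsilon_{\tau_i}$ was defined. As this strengthened statement is precisely what the induction is proving, the argument closes.
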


\begin{proof} We proceed inductively.
For $K=\{\emptyset\}$, $V(K)=\text{Span}\{v_\emptyset\}\cong {\bf k}$ with $\iota=0$
and $C_*(K; {\bf k})=\text{Span}\{\emptyset\}\cong{\bf k}$ with $\partial=0$, so clearly we have such a $f$.
 Now for an arbitrary $K\not=\{\emptyset\}$, take a maximal element $\sigma_0$ of $K$ (as a poset) so that $L=K\setminus\{\sigma_0\}$ is a subcomplex of $K$. The subspace $V(K)|_L=\text{Span}\{v_\sigma|\sigma\in L\}$
 is invariant under  $\iota$. So we can apply induction hypothesis to
 $(V(K)|_L, \iota)$, yielding an isomorphism $f_0: V(K)|_L\longrightarrow
C_*(L;{\bf k})$ by $v_\sigma\longmapsto \varepsilon_\sigma\sigma$ such that $f_0\circ \iota=\partial \circ f_0$.
 Now observe that $\iota(v_{\sigma_0})=\sum_{k\in \sigma_0} \varepsilon_k v_{\sigma_0\setminus\{k\}}
 \in V(K)|_L$, so $f_0(\iota(v_{\sigma_0}))= \sum_{k\in \sigma_0} \varepsilon_k \varepsilon_{\sigma_0\setminus\{k\}} (\sigma_0\setminus\{k\})$ which is in the chain group $C_{|\sigma_0|-2}(2^{\sigma_0};{\bf k})\subset C_{|\sigma_0|-2}(L; {\bf k})$, and
 $(\partial\circ f_0)(\iota(v_{\sigma_0}))=(f_0\circ\iota)(\iota(v_{\sigma_0}))=f_0(\iota^2(v_{\sigma_0}))=0$, i.e.,
 $f_0(\iota(v_{\sigma_0}))\in \ker\partial$.
Since $C_*(2^{\sigma_0};{\bf k})$ is acyclic and $C_{|\sigma_0|-1}(2^{\sigma_0};{\bf k})=\text{Span}\{
 \sigma_0\}$, we have $f_0(\iota(v_{\sigma_0}))=\partial (n\sigma_0)$ for some $n\in {\bf k}$.
 However, $\partial (n\sigma_0)=n\partial(\sigma_0)$ so  $n\partial(\sigma_0)=\sum_{k\in \sigma_0} \varepsilon_k \varepsilon_{\sigma_0\setminus\{k\}}(\sigma_0\setminus\{k\})$. This forces $n$ to be $\pm1$.
 We can then extend $f_0$ to $f: V(K)\longrightarrow C_*(K;{\bf k})$ by defining $v_{\sigma_0}\longmapsto
 n\sigma_0$, so that we have
 $$f(\iota(v_{\sigma_0}))=f_0(\iota(v_{\sigma_0}))=\partial(n\sigma_0)=\partial(f(v_{\sigma_0})).$$
 Hence $f\circ \iota=\partial \circ f$ in $V(K)$.  The induction step is finished, proving the lemma.
\end{proof}

The famous  Hochster formula tells us (see \cite[Corollary 5.12]{ms}) that for each $a\in 2^{[m]}$,
$$\widetilde{H}^{|a|-i-1}(K|_a; {\bf k})\cong \text{Tor}^{{\bf k}[{\bf v}]}_i({\bf k}(K), {\bf k})_a.$$
We know by Lemma~\ref{hochster} that each class of $\widetilde{H}^{|a|-i-1}(K|_a; {\bf k})$ may be understood as one of $H^*(\Omega^*(K)_a; {\bf k})$, represented by a linear combination of the elements of the form ${\bf x}^{(a\setminus\sigma, \sigma)}\in \Omega^*(K)_a$ with $|\sigma|=|a|-i$; so
by Proposition~\ref{module-iso} it corresponds to a cohomological class of degree
$|\sigma|+\sum_{k\in a}\kappa_k=-i+\sum_{k\in a}(\kappa_k+1)$ in
$H^*(\mathcal{Z}_K^{(\underline{{\Bbb D}}, \underline{{\Bbb S}})};{\bf k})$. To sum up, it follows that for each $n\geq 0$,
$$H^n(\mathcal{Z}_K^{(\underline{{\Bbb D}}, \underline{{\Bbb S}})};{\bf k})\cong \bigoplus_{a\in 2^{[m]} \atop -i+\sum_{k\in a}(\kappa_k+1)=n}\text{\rm Tor}_i^{{\bf k}[{\bf v}]}({\bf k}(K), {\bf k})_a.$$
Combining  with all arguments above, we conclude  that
\begin{thm}\label{module-str}
As graded ${\bf k}$-modules,
$$H^*(\mathcal{Z}_K^{(\underline{{\Bbb D}}, \underline{{\Bbb S}})};{\bf k})\cong \text{\rm Tor}^{{\bf k}[{\bf v}]}({\bf k}(K), {\bf k}).$$
\end{thm}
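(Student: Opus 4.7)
The plan is to chain together the three principal ingredients already set up in this section: the cochain-level isomorphism of Proposition~\ref{module-iso}, the fine multigraded decomposition of $\Omega^*(K)$ together with its simplicial interpretation in Lemma~\ref{hochster}, and the classical Hochster formula for the Tor-modules of a Stanley--Reisner ring. The overall idea is to work piece-by-piece on the grading by $a\in 2^{[m]}$ and then reassemble in terms of the ordinary cohomological grading.

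Step by step, I would first invoke Proposition~\ref{module-iso} to reduce the problem to computing $H^*(\Omega^*(K);{\bf k})$. Since the coboundary $\underline{d}$ preserves the $2^{[m]}$-grading on $\Omega^*(K)$, this cohomology splits as
$$H^*(\Omega^*(K);{\bf k}) = \bigoplus_{a\in 2^{[m]}} H^*(\Omega^*(K)_a;{\bf k}),$$
and Lemma~\ref{hochster} identifies each summand with $\widetilde{H}^*(K|_a;{\bf k})$. Next I would apply the Hochster formula $\widetilde{H}^{|a|-i-1}(K|_a;{\bf k}) \cong \mathrm{Tor}^{{\bf k}[{\bf v}]}_i({\bf k}(K),{\bf k})_a$ to convert each simplicial cohomology group to the corresponding Tor-module, so that summing over $a$ and $i$ recovers the whole Tor-algebra on the right-hand side.

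The main point requiring care, and the only real obstacle, is the degree bookkeeping needed to match the ordinary cohomological grading on $\mathcal{Z}_K^{(\underline{{\Bbb D}},\underline{{\Bbb S}})}$ with the bigrading coming from the Tor side. A class of $\widetilde{H}^{|a|-i-1}(K|_a;{\bf k})$ is represented under the identification of Lemma~\ref{hochster} by a linear combination of basis elements ${\bf x}^{(a\setminus\sigma,\sigma)}\in\Omega^*(K)_a$ with $|\sigma|=|a|-i$; since $x_k^{(1)}$ has cellular degree $\kappa_k$ and $x_k^{(2)}$ has cellular degree $\kappa_k+1$, such a basis element sits in total cellular degree $\sum_{k\in a\setminus\sigma}\kappa_k + \sum_{k\in\sigma}(\kappa_k+1) = -i+\sum_{k\in a}(\kappa_k+1)$. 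Collecting contributions of total degree $n$ then yields
$$H^n(\mathcal{Z}_K^{(\underline{{\Bbb D}},\underline{{\Bbb S}})};{\bf k}) \cong \bigoplus_{\substack{a\in 2^{[m]},\, i\geq 0 \\ -i+\sum_{k\in a}(\kappa_k+1)=n}} \mathrm{Tor}_i^{{\bf k}[{\bf v}]}({\bf k}(K),{\bf k})_a,$$
and summing over $n$ gives the claimed isomorphism of graded ${\bf k}$-modules. Because only the ${\bf k}$-module structure is asserted and not the algebra structure, no further multiplicative check is required; all the substantive homological input has already been absorbed into Proposition~\ref{module-iso}, Lemma~\ref{hochster}, and the Hochster formula.
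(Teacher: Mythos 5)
Your proof is correct and follows essentially the same path as the paper: reduce to $H^*(\Omega^*(K);{\bf k})$ via Proposition~\ref{module-iso}, split along the $2^{[m]}$-grading, identify each summand via Lemma~\ref{hochster} and the Hochster formula, and match total degrees using the cellular degrees of $x_k^{(1)}$ and $x_k^{(2)}$. The degree bookkeeping $|\sigma|+\sum_{k\in a}\kappa_k = -i+\sum_{k\in a}(\kappa_k+1)$ agrees with the paper's computation.
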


Together with Proposition~\ref{tor-betti} and Theorem~\ref{module-str}, we obtain that

\begin{cor}\label{sum-betti}
$\sum\limits_i\dim_{\bf k}H^i(\mathcal{Z}_{K}^{(\underline{{\Bbb D}}, \underline{{\Bbb S}})}; {\bf k})=\sum\limits_{i=0}^h\sum\limits_{a\in
2^{[m]}}\beta_{i,a}^{{\bf k}(K)}.$
\end{cor}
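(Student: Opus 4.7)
The plan is to deduce this corollary as a direct formal consequence of the two immediately preceding results: Theorem~\ref{module-str} identifies $H^*(\mathcal{Z}_K^{(\underline{{\Bbb D}}, \underline{{\Bbb S}})};{\bf k})$ with $\text{Tor}^{{\bf k}[{\bf v}]}({\bf k}(K), {\bf k})$ as graded ${\bf k}$-modules, while Proposition~\ref{tor-betti} records that the total ${\bf k}$-dimension of $\text{Tor}^{{\bf k}[{\bf v}]}({\bf k}(K), {\bf k})$ equals $\sum_{i=0}^{h}\sum_{a\in 2^{[m]}}\beta_{i,a}^{{\bf k}(K)}$. So the only task is to chain these two facts through the operation of summing ${\bf k}$-dimensions.

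Concretely, I would first pass from the graded isomorphism of Theorem~\ref{module-str} to the equality
\[
\sum_i \dim_{\bf k} H^i(\mathcal{Z}_K^{(\underline{{\Bbb D}}, \underline{{\Bbb S}})}; {\bf k}) = \sum_i \dim_{\bf k} \text{Tor}_i^{{\bf k}[{\bf v}]}({\bf k}(K), {\bf k}),
\]
which requires nothing more than observing that a graded ${\bf k}$-module isomorphism preserves the total dimension when summed over all internal degrees (here the homological degree on the right reorganizes the cohomological degree on the left, but the total dimension over all degrees is invariant). Then I would apply Proposition~\ref{tor-betti} directly to rewrite the right-hand side as $\sum_{i=0}^{h}\sum_{a\in 2^{[m]}}\beta_{i,a}^{{\bf k}(K)}$, completing the identification.

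There is no real obstacle here: all the substantive work has already been carried out in Section~\ref{s4} to establish Theorem~\ref{module-str} (via the cochain complex $\Omega^*(K)$, Proposition~\ref{module-iso}, Lemma~\ref{hochster} and Hochster's formula) and in Section~\ref{s2} to establish Proposition~\ref{tor-betti} (via the minimal free resolution and the vanishing of the differentials after tensoring with ${\bf k}$). The corollary is simply the numerical shadow of Theorem~\ref{module-str} re-expressed in terms of the multigraded Betti numbers of ${\bf k}(K)$, and the proof can be written in two or three lines invoking the two cited results.
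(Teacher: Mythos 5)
Your proposal matches the paper's own derivation exactly: the paper introduces Corollary~\ref{sum-betti} with the single line ``Together with Proposition~\ref{tor-betti} and Theorem~\ref{module-str}, we obtain that,'' which is precisely the two-step chaining of the graded isomorphism and the Tor-dimension formula you describe. Correct and same approach.
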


\begin{rem}\label{ring str} It should be pointed out that here we merely determine the ${\bf k}$-module structure of $H^*(\mathcal{Z}_K^{(\underline{{\Bbb D}}, \underline{{\Bbb S}})};{\bf k})$. Of course, this is enough for our purpose
in this paper.
Observe that if there are two  $i, j\in [m]$ with $i\not=j$ such that $\kappa_i$ and $\kappa_j$ are even, then
for $x_i^{(2)}, x_j^{(2)}\in \Omega^*(K)$, $x_i^{(2)}\times x_j^{(2)}=-x_j^{(2)}\times x_i^{(2)}$. This means that in this case,
if ${\bf k}$ is not a field of characteristic 2, then $H^*(\Omega^*(K); {\bf k})$ cannot be isomorphic to $\text{\rm Tor}^{{\bf k}[{\bf v}]}({\bf k}(K), {\bf k})$ as ${\bf k}$-algebras since ${\bf k}(K)$ is a commutative ring.
Even when $\bf k$ is a field of characteristic 2, there is still
some nuance preventing us from simply extending the ring structure
result (\ref{bp}) of Buchstaber and Panov  to the case of, say
$\R\mathcal{Z}_K$; Indeed, in this case $x_i^{(1)}$ would be a $0$-cochain,
which satisfies $x_i^{(1)}\cup x_i^{(1)}=x_i^{(1)}$, whereas in the
cases when $\kappa_i>0$, $x_i^{(1)}\cup x_i^{(1)}$ would be instead
zero element in $H^*({\Bbb S}_i; {\bf k})$. Nevertheless, our calculation
of the module structure actually represents any cohomological class
in $H^*(\mathcal{Z}_{K}^{(\underline{{\Bbb D}}, \underline{{\Bbb
S}})}; {\bf k})$ as a sum of ${\bf x}^{(\tau, \sigma)}$'s via the isomorphism
$H^*(\Omega^*(K);{\bf k})\cong H^*(\mathcal{Z}_{K}^{(\underline{{\Bbb D}},
\underline{{\Bbb S}})};{\bf k})$, from which we may also figure out the
cohomological equivalence relation amongst such sums; since the cup
product of pairs of these elements is clear, in a certain sense we should
have also determined the ring structure of
$H^*(\mathcal{Z}_{K}^{(\underline{{\Bbb D}}, \underline{{\Bbb
S}})};{\bf k})$. In other words, let ${\bf k}(K)={\bf k}[{\bf v}]/I_K={\bf k}[v_1, ...,v_m]/I_K$ be the  Stanley--Reisner face ring of $K$ with
$\deg v_i=\kappa_i+1$. Then it should be reasonable to {\em conjecture} that the following  results  hold:
\begin{enumerate}
\item[$\bullet$] If all $\kappa_i$'s are odd, then $H^*(\mathcal{Z}_K^{(\underline{{\Bbb D}},
\underline{{\Bbb S}})};{\bf k})\cong \text{\rm Tor}^{{\bf k}[{\bf v}]}({\bf k}(K), {\bf k})$ as ${\bf k}$-algebras.
\item[$\bullet$] If  $\kappa_i>0$ for any $i\in [m]$, then $H^*(\mathcal{Z}_K^{(\underline{{\Bbb D}}, \underline{{\Bbb S}})};{\bf k}_2)\cong \text{\rm Tor}^{{\bf k}_2[{\bf v}]}({\bf k}_2(K), {\bf k}_2)$ as ${\bf k}_2$-algebras.
   \item[$\bullet$]  In general,  $H^*(\mathcal{Z}_K^{(\underline{{\Bbb D}}, \underline{{\Bbb S}})};{\bf k}_2)\cong H[H^*(\prod_{i=1}^m{\Bbb S}_i;{\bf k}_2)\otimes_{{\bf k}_2[{\bf v}]}{\bf k}_2(K)]$ as ${\bf k}_2$-algebras.
\end{enumerate}
\end{rem}

\section{Application to the free actions on $\mathcal{Z}_K$ and ${\Bbb R}\mathcal{Z}_K$}\label{s5}

First we prove a useful lemma.

\begin{lem}\label{lem:coloring}
Let $K\in \mathcal{K}_{[m]}$ be an abstract simplicial complex on
vertex set $[m]$, and let $H$ $($resp. $H_{\Bbb R})$ be a rank $r$
subtorus of $T^m$ $($resp. $(\Z_2)^m)$. If $H$ $($resp. $H_{\Bbb
R})$ can freely act on $\mathcal{Z}_K$ $($resp. ${\Bbb
R}\mathcal{Z}_K)$, then $r\leq m-\dim K-1$.
\end{lem}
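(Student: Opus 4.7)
The plan is to examine the isotropy groups of the natural torus action on $\mathcal{Z}_K$ and exploit the existence of a simplex $\sigma \in K$ of maximal dimension to force a dimension count.

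First I would recall the structure of the $T^m$-action $\Phi$ on $\mathcal{Z}_K$. For a point $z=(z_1,\dots,z_m)\in (D^2)^m$ lying in $B_\sigma(D^2,S^1)$, the isotropy subgroup at $z$ under the coordinatewise $T^m$-action is the coordinate subtorus $T^\tau := \{(g_1,\dots,g_m)\in T^m \mid g_i=1\text{ for }i\notin\tau\}$, where $\tau\subseteq\sigma$ consists of those $i$ with $z_i=0$. In particular, for every simplex $\tau\in K$ (including a maximal one) there is an actual point of $\mathcal{Z}_K$ with isotropy exactly $T^\tau$, namely the point having $z_i=0$ for $i\in \tau$ and $|z_i|=1$ for $i\notin\tau$.

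Next I would translate freeness of the $H$-action into an intersection condition. If $H\subseteq T^m$ acts freely on $\mathcal{Z}_K$, then for every simplex $\tau\in K$ we must have $H\cap T^\tau=\{1\}$. Choose $\tau$ to be a facet of $K$, so $|\tau|=\dim K+1$. Since $H$ and $T^\tau$ are closed subtori of $T^m$ whose intersection is trivial, the internal product $H\cdot T^\tau$ is a subtorus of $T^m$ of rank $r+|\tau|=r+\dim K+1$. Because every subtorus of $T^m$ has rank at most $m$, this yields $r\leq m-\dim K-1$, as required.

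For the real case I would run the identical argument, replacing $T^m$ by $(\Z_2)^m$ and $T^\tau$ by $(\Z_2)^\tau$: at the analogous point of $\R\mathcal{Z}_K$ the isotropy under $\Phi_{\Bbb R}$ is the coordinate subgroup $(\Z_2)^\tau$, and freeness forces $H_{\Bbb R}\cap (\Z_2)^\tau=\{1\}$, so that $|H_{\Bbb R}|\cdot|(\Z_2)^\tau|=2^{r+|\tau|}\leq 2^m$, again giving $r\leq m-\dim K-1$.

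I do not foresee a real obstacle here; the only point requiring a touch of care is verifying that the isotropy groups of $\Phi$ and $\Phi_{\Bbb R}$ are precisely the coordinate subgroups $T^\tau$, $(\Z_2)^\tau$ for $\tau\in K$, and that these are realized on genuine points of $\mathcal{Z}_K$, $\R\mathcal{Z}_K$. This follows directly from the defining decomposition $\mathcal{Z}_K=\bigcup_{\sigma\in K}B_\sigma(D^2,S^1)$ recalled in Subsection~\ref{s4.1}.
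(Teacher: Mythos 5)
Your proof is correct and follows essentially the same line as the paper's: both identify a facet $a$ of $K$, locate a point of $\mathcal{Z}_K$ (resp.\ $\R\mathcal{Z}_K$) whose isotropy under the full coordinatewise action contains the coordinate subtorus $T^a$ (resp.\ $(\Z_2)^a$), and then use a rank count to show that freeness forces $r+|a|\leq m$. The only cosmetic difference is that the paper phrases the rank count as a proof by contradiction (assuming $r>m-\dim K-1$ and concluding $H\cap G_z$ is nontrivial), while you argue directly via the product $H\cdot T^\tau$ being a subtorus of rank $r+|\tau|$; the two are equivalent.
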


\begin{proof}
 It is well-known
that $H$ (resp. $H_{\Bbb R}$) can freely act on $\mathcal{Z}_K$
(resp. ${\Bbb R}\mathcal{Z}_K$) if and only if for any point $z$
(resp. $x$) of $\mathcal{Z}_K$ (resp. ${\Bbb R}\mathcal{Z}_K$),
$H\cap G_z$ (resp. $H_{\Bbb R}\cap G_x$) is trivial, where $G_z$
(resp. $G_x$) is the isotropy subgroup at $z$ (resp. $x$). Suppose
that $r>m-\dim K-1$. Take $a\in K$ with $|a|=\dim K+1$. Without the
loss of generality, assume that $a=\{1, ..., |a|\}$. Then we see
that $\mathcal{Z}_K$ (resp. ${\Bbb R}\mathcal{Z}_K$) contains  the
point of the form $z=(0, ..., 0, z_{|a|+1}, ..., z_m)$ (resp. $x=(0,
..., 0, x_{|a|+1}, ..., x_m)$). It is easy to see that the isotropy
subgroup $G_z$ (resp. $G_x$) has rank at least $|a|$, so the
intersection $H\cap G_z$ (resp. $H_{\Bbb R}\cap G_x$) cannot be
trivial. This contradiction means that $r$ must be equal to or less
than $m-\dim K-1$.
\end{proof}

Now let us use the preceding results to complete the proof of
Theorem~\ref{main}.

\vskip .2cm

 \noindent {\em Proof Theorem~\ref{main}}. Let
$f=\sum_{a\in K}\delta_a\in\mathcal{F}_{[m]}$ such that
$\mathrm{supp}(f)=K$. If $f=\underline{1}$ (i.e., $K=2^{[m]}$), then
$\mathcal{Z}_K=(D^2)^m$ (resp. ${\Bbb R}\mathcal{Z}_K=(D^1)^m$).
However, any properly nontrivial subtorus of $T^m$  (resp.
$(\Z_2)^m$) cannot freely act on $(D^2)^m$ (resp. $(D^1)^m$) since
the point $(0, ..., 0)$ is always a fixed point. Thus we may assume
that $f\not=\underline{1}$.   By Theorem \ref{t:extend}, there exists some $a\in 2^{[m]}$ with
$a\not=[m]$ such that $a\in \mathrm{supp}(f)=K$ and
$|\mathrm{supp}(\Mob(f))|\geq 2^{m-n}$ where $n=|a|$. Since $a\in
K$, we have that $n\leq \dim K+1$. So by Lemma \ref{lem:coloring} it
follows that $n\leq m-r$ and  $r\leq m-n$. Combining  with
Theorem~\ref{t:extend} and Corollaries~\ref{bt} and~\ref{sum-betti}  together
gives
$$2^r\leq 2^{m-n}\leq|\mathrm{supp}(\Mob(f))|\leq \sum_i\dim_{{\bf k}}H^i(\mathcal{Z}_K; {\bf k})=\sum_i\dim_{{\bf k}}H^i({\Bbb R}\mathcal{Z}_K; {\bf k})$$
as desired. \hfill $\Box$

\vskip .2cm

\begin{acknow}
The authors are grateful to M. Franz, M. Masuda, T. E. Panov, V. Puppe and L. Yu for their comments and suggestions.
\end{acknow}


\begin{thebibliography}{99}
\bibitem{ab} A. Adem and W. Browder, {\em The free rank of symmetry
of $(S^n)^k$}, Invent. Math. {\bf 92} (1988), 431--440.
\bibitem{ad} A. Adem and J. F. Davis, {\em Topics in transformation
groups}, Handbook of Geometric Topology, 1--54, North--Holland,
Amsterdam, 2002.
\bibitem{ap} C. Allday and V. Puppe, {\em Cohomological methods
in transformation groups}, Cambridge Studies in Advanced
Mathematics, {\bf 32}, Cambridge University Press,  1993.
\bibitem{bbcg} A. Bahri, M. Bendersky, F. R. Cohen and S.
Gitler, {\em Decompositions of the polyhedral product functor with
applications to moment-angle complexes and related spaces}, Proc.
Nat. Acad. Sci. U.S.A. {\bf 106} (2009),  12241--12244.
\bibitem{bp} V. M. Buchstaber and T. E. Panov, {\em Torus actions and their applications in topology and
combinatorics},
 University Lecture Series, Vol. {\bf 24}, Amer.
Math. Soc., Providence, RI, 2002.
\bibitem{bp1} V. M. Buchstaber and T. E. Panov,  {\em Combinatorics of simplicial cell complexes and
torus action},  Proc. Steklov Inst. Math. {\bf 247} (2004), 33--49.
 \bibitem{ca1} G. Carlsson, {\em On the non-existence of free
 actions of elementary abelian groups on products of spheres}, Amer.
 J. Math. {\bf 102} (1980), 1147--1157.
 \bibitem{ca2} G. Carlsson, {\em On the rank  of abelian groups
 acting  freely on $(S^n)^k$}, Invent. Math.  {\bf 69} (1982), 393--400.
 \bibitem{ca3} G. Carlsson, {\em Free $(\Z/2)^k$-actions and a problem in commutative algebra},
 Transformation groups, Pozna\'n 1985, 79--83, Lecture Notes in Math., {\bf 1217}, Springer, Berlin, 1986.
 \bibitem{c} P. E. Conner, {\em On the action of a finite group on $S^n\times S^n$},  Ann. of Math. {\bf 66} (1957),  586--588.
\bibitem{dj} M. W.  Davis and T. Januszkiewicz, {\em Convex polytopes, Coxeter orbifolds and torus actions},
Duke Math. J. {\bf 62} (1991), 417--451.
\bibitem{ds} G. Denham and A. Suciu, {\em Moment-angle complexes, monomial ideals and Massey
products},  Pure Appl. Math.  Q.  {\bf 3} (2007), 25--60.
\bibitem{h} S. Halperin, {\em Rational homotopy and torus actions}. Aspects of topology, 293--306,
London Math. Soc. Lecture Note Ser., {\bf 93}, Cambridge Univ.
Press, Cambridge, 1985.
\bibitem{ms} E. Miller and  B. Sturmfels, {\em Combinatorial Commutative
Algebra}, Graduate Texts in Math. {\bf 227}, Springer, 2005.
\bibitem{pa} T. E. Panov, {\em Cohomology of face rings and torus actions},  London Math. Soc. Lect.
Note Ser. {\bf 347} (2008), 165--201.
\bibitem{p} V. Puppe, {\em Multiplicative aspects of the Halperin--Carlsson
conjecture}, arXiv:0811.3517
\bibitem{s} P. A. Smith, {\em Orbit spaces of abelian $p$-groups},  Proc. Nat. Acad. Sci. U.S.A. {\bf 45} (1959), 1772--1775.
\bibitem{y} E. Yalcin, {\em Groups actions and group extensions}, Trans. Amer. Math. Soc. {\bf 352} (2000), 2689--2700.
 \end{thebibliography}
\end{document}